\newcommand{\tdens}{\tau}
\newcommand{\ER}{Erd\H{o}s-R\'enyi}
\newcommand{\R}{\mathbb{R}}
\newcommand{\calG}{\mathcal{G}}
\newcommand{\calP}{\mathcal{P}}
\newcommand{\be}{\begin{equation}}
\newcommand{\ee}{\end{equation}}
\newcommand{\www}{{\mathcal W}}
\newtheorem{lemma}{Lemma}
\newtheorem{theorem}[lemma]{Theorem}
\newtheorem{corollary}[lemma]{Corollary}
\title{Optimal graphons in the edge-2star model}
\author{Charles Radin and Lorenzo Sadun}
\address{Charles Radin\\Department of Mathematics\\The University of
  Texas at Austin\\ Austin, TX 78712} \email{radin@math.utexas.edu}
\address{Lorenzo Sadun\\Department of Mathematics\\The University of
  Texas at Austin\\ Austin, TX 78712} \email{sadun@math.utexas.edu}
\date{\today}
\begin{document}
\begin{abstract}
  In the edge-2star model with hard constraints we prove the existence
  of an open set of constraint parameters, bisected by a
  line segment on which there are nonunique entropy-optimal graphons
  related by a
  symmetry. At each point in the open set but off the line segment there is a unique
  entropy-optimizer, bipodal and varying analytically with the constraints. 
  We also show that throughout another open set, containing a different portion
  of the same line of symmetry, there is instead a unique optimal
  graphon, varying analytically with the parameters. 
  We explore the extent
  of these open sets, determining the point at which a symmetric graphon ceases to be
  a local maximizer of the entropy. Finally, we prove some foundational theorems in 
  a general setting, relating optimal graphons to the Boltzmann entropy and the 
  generic structure of large constrained random graphs. 
 \end{abstract}
\maketitle

\section{Introduction}

This paper serves two purposes, the primary one being
to derive results about the edge-2star graphon
model in which we consider large dense random graphs 
with hard constraints
on the density $e$ of edges and $t$ of 2stars. (A 2star, sometimes called a 
``cherry'', is a simple graph with three vertices and two edges.) This is the 
simplest model
in which we employ hard competing constraints, allowing for strong 
rigorous results about non-constant graphons, or equivalently about large graphs that
are not \ER{}. 

The second purpose of the paper is to provide proofs of two theorems relating Boltzmann
entropy and ``typical'' large graphs to solutions of an optimization problem on 
graphons. These are described in more detail below, but the gist of
the first, originally proven in less generality in \cite{RS1,RS2}, is that
the Boltzmann entropy associated with some constrained subgraph densities, which is 
the rate at which the number of graphs with those densities grows with the number 
of vertices, is the same as the maximal Shannon entropy of a graphon meeting certain
integral constraints. The second theorem says that, if the graphon optimization problem
has a unique solution, then all but exponentially few large graphs with the 
specified subgraph densities have a structure very close to that 
described by the optimal graphon. Taken together, they imply that solving problems 
involving hard constraints on graph{ons} is tantamount to understanding 
the ensemble of  large constrained random {graphs}.

\subsection{Results about the edge-2star model}

Our first result on the edge-2star model concerns the region with edge density 
close to $1/2$ and 2star density close to the maximum. (See Figure \ref{Fig-L1}.)
 We show that
the open set of 
graphons with
reduced 2star density $\tilde t = t - e^2$ close to the maximum has
two open subsets,
one of which we call ``clique-like'' and the other of which we call
``anti-clique-like'', separated by a segment of the line $e=\frac12$. 
(See Figure \ref{Fig-L2}.)
\begin{theorem}[Theorems \ref{thmCliAnticli},\ref{thm2}] \label{main1} 
There is an open set in $(e, \tilde t)$-space containing a segment of
the line $e=\frac12$ with $\tilde t$ just below its maximum of $\frac{\sqrt{2}-1}{4}$,
such that
\begin{itemize} 
\item When $e>\frac12$, the entropy-optimizing graphon is unique and clique-like,  
\item When $e < \frac12$, the entropy-optimizing graphon is unique and anti-clique-like, and  
\item When $e=\frac12$ there are two entropy-optimizing graphons, one
clique-like and one anti-clique-like.
\end{itemize}
\end{theorem}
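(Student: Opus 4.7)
The plan is to exploit the $\Z/2$ complementation symmetry $g(x,y) \mapsto 1 - g(x,y)$, which sends $e \mapsto 1-e$ while preserving $\tilde t = t - e^2$ and the Shannon entropy. First I would establish that near the maximum of $\tilde t$, every entropy-optimizer must be bipodal, i.e. piecewise constant on a $2\times 2$ block decomposition of $[0,1]^2$, and in fact must saturate one of the two diagonal blocks at $0$ or $1$. The Euler-Lagrange equations for the constrained entropy force $g$ to take only a few values determined by the Lagrange multipliers for $e$ and $t$; pushing $\tilde t$ toward its maximum $\frac{\sqrt 2 -1}{4}$ forces one diagonal block value to the boundary of $[0,1]$. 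Call these two families the \emph{clique-like} (top-left block equal to $1$) and \emph{anti-clique-like} (bottom-right block equal to $0$) graphons; complementation exchanges them.

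Next I would parametrize each family by the bipodal moduli $(c,p,r)$, with one diagonal value fixed, and write out the constraints $e = e_0$ and $\tilde t = \tilde t_0$ as two equations in three unknowns. Restricting the entropy to the resulting one-parameter feasible curve reduces the problem to a single-variable maximization. At the base point $e_0 = 1/2$ and $\tilde t_0$ just below $\frac{\sqrt 2 - 1}{4}$, I would verify by an explicit computation that within each family there is a unique critical point, and that the second variation of entropy in the transverse direction is negative-definite, so that this critical point is a strict local maximizer.

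Then I would invoke the implicit function theorem to extend this critical point analytically to an open neighborhood of $(1/2, \tilde t_0)$ in constraint space, producing an analytic family of clique-like local maximizers; complementation yields the corresponding anti-clique-like family. On the line $e = 1/2$ the two families are related by symmetry and so have equal entropy, while off this line their entropy derivatives with respect to $e$ differ in sign, so that for $e > 1/2$ the clique-like family strictly dominates in entropy and for $e < 1/2$ the anti-clique-like one does. Combined with the symmetry, this yields the trichotomy stated in the theorem.

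The main obstacle will be ruling out every other candidate optimizer in the chosen open set. Local maximality among bipodal graphons is not enough; one must show that, shrinking the neighborhood if necessary, the only entropy-critical graphons at these $(e, \tilde t)$ are the two bipodal families already described. This requires combining the Euler-Lagrange classification of critical graphons from \cite{RS1,RS2} with the observation that no strictly interior bipodal critical point can satisfy the constraints without violating the Hessian sign, and that non-bipodal critical points cannot exist so close to the $\tilde t$-maximum. Once this classification is in hand, global optimality follows from compactness in the cut metric together with continuity of the constrained entropy.
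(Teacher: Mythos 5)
Your use of the $g\mapsto 1-g$ symmetry, and your reduction of the trichotomy to a comparison of $S_c(e,\tilde t)$ with $S_a(e,\tilde t) = S_c(1-e,\tilde t)$, match the paper's overall architecture. However, there are two substantive gaps.

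First, you assert that the Euler--Lagrange equations force a bipodal optimizer with one diagonal block exactly saturated at $0$ or $1$, and you define clique-like and anti-clique-like graphons this way. This is incorrect. The stationarity condition is $g(x,y) = \bigl(1 + \exp(-(\alpha+\beta(d(x)+d(y))))\bigr)^{-1}$, whose right-hand side always lies strictly in $(0,1)$; no block can saturate. In the paper's solution the block values are $a = 1 - \delta + O(\delta^2)$ and $b = O(\delta^3)$ with $\delta$ small but nonzero, and it is precisely the small positive entropy from $a$ being slightly below $1$ (and $b$ above $0$) that the optimization exploits. The paper therefore defines ``clique-like'' as having degree function merely $L^1$-close to the step function $\{\sqrt e, 0\}$, not as having a frozen block. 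Restricting to your saturated family would miss the actual optimizer.

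Second, the step that for $e > 1/2$ the clique-like family ``strictly dominates in entropy'' is not justified by your sketch. From $S_c(e,\tilde t) = S_a(1-e,\tilde t)$ you correctly get that $\partial_e S_c$ and $\partial_e S_a$ have opposite signs at $e=\frac12$, but this alone does not tell you \emph{which} sign $\partial_e S_c$ has, and hence does not tell you which family wins on each side. The paper supplies exactly the missing ingredient: a boundary analysis showing $\alpha/\beta \approx -\frac32\sqrt e$ for the optimal clique-like graphon, from which $\partial_e S_c \propto \alpha + 2e\beta \approx (2e - \tfrac32\sqrt e)\beta > 0$ because $\beta<0$ and $3\sqrt 2/4 > 1$ near $e=\frac12$. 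Without a computation of this type, the trichotomy cannot be resolved. Your concluding remarks about ruling out other critical graphons are also too thin to carry the uniqueness claim; the paper's Theorem \ref{thm2} proceeds by showing the consistency function $k(z)$ is nearly a three-level step, eliminating middle podes, and then applying a nondegenerate implicit-function argument -- a concrete scheme you would need to replicate rather than gesture at.
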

We thus show that, on each side of a segment of the line $e=1/2$, the optimal
graphon is bipodal and unique, with parameters that vary smoothly with
$e$ and $\tilde t$.  This implies that there is a {\em discontinuous}
phase transition across the line $e=\frac12$, with typical graphs
being anti-clique-like on one side of the line and clique-like on the
other.

The situation is very different when $\tilde t$ is small. Let 
$\zeta = \sqrt{\left (e-\frac12 \right )^2 + \tilde t}$. 

\begin{theorem}[Theorem \ref{thm:low-t}]\label{main2} 
For sufficiently small $\zeta$, the entropy-maximizing graphon is 
unique and bipodal, with parameters
\begin{eqnarray}
a & = & 1 - e - 2\zeta + O(\zeta^2), \cr 
b & = & 1-e + 2 \zeta + O(\zeta^2), \cr
d & = & 1-e + O(\zeta^2), \cr 
c & = & \frac12 \left (1 - \frac{e-\frac12}{\zeta} \right ) + O(\zeta)
\end{eqnarray}
that are analytic functions of $(e,\tilde t)$ everywhere except at the singular point 
$e=\frac12$, $\tilde t = 0$. 
\end{theorem}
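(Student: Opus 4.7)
The plan is to derive the critical-point conditions for bipodal graphon optimization, expand perturbatively about the singular point $(e,\tilde t)=(1/2,0)$ using $\zeta$ as the natural small parameter, and upgrade the expansion to an analyticity statement via the implicit function theorem.

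First I would reduce to the four-parameter bipodal family $(a,b,c,d)$, using the structural results on entropy-optimal graphons in the edge-2star model (including those proved earlier in the paper) to justify this reduction in the region at hand. In the convention where $a$ is the within-community-$1$ value, $b$ the within-community-$2$ value, $d$ the between-community value, and $c$ the mass of community~$1$, one has
\[ e=c^2 a + 2c(1-c) d + (1-c)^2 b, \qquad \tilde t = c(1-c)\bigl[c(a-d)+(1-c)(d-b)\bigr]^2, \]
and the entropy is $c^2 h(a)+2c(1-c)h(d)+(1-c)^2 h(b)$ with $h(x)=-x\log x-(1-x)\log(1-x)$. The Euler--Lagrange conditions with Lagrange multipliers $(\alpha,\beta)$, together with these two integral constraints, form a closed system for $(a,b,c,d,\alpha,\beta)$ in terms of $(e,\tilde t)$.

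Next I would rescale near the singular point. At $\zeta=0$ the only feasible bipodal graphon is constant, $a=b=d=1/2$ with $c$ undetermined, explaining the impossibility of a smooth extension through $(1/2,0)$. Introducing $A=(1-e-a)/\zeta$, $B=(b-(1-e))/\zeta$, $D=(d-(1-e))/\zeta^2$, and the angular variable $\theta=(e-1/2)/\zeta\in[-1,1]$, the edge constraint forces $c=\tfrac12(1-\theta)$ at leading order, the $\tilde t$ constraint forces $A=B=2$, and the Euler--Lagrange equations then pin down $\alpha=O(\zeta)$, $\beta=-4+O(\zeta)$, and $D$ as a bounded function of $\theta$, reproducing the stated leading behaviour. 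Applying the implicit function theorem to the full rescaled system in $(A,B,D,c,\alpha,\beta)$ with parameters $(\zeta,\theta)$ and verifying invertibility of the Jacobian at the leading-order solution for all $\theta\in[-1,1]$ yields an analytic solution for small $\zeta$ depending analytically on $\theta$; since $\zeta$ and $\theta$ are themselves real-analytic functions of $(e,\tilde t)$ off the origin, composition gives the claimed analyticity of $(a,b,c,d)$ away from the singular point. That the critical point so constructed is in fact the entropy maximum would follow from negative-definiteness of the constrained Hessian at the solution and an entropy comparison at order $\zeta^2$ against any competing bipodal or higher-podal branch.

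The main obstacle is the genuine degeneracy at $(1/2,0)$: the bipodal parametrization becomes redundant when $a=b=d$, so the critical-point system has a zero mode in the $c$-direction at the ER point, and IFT cannot be applied in the $(e,\tilde t)$ variables directly. The polar-type blow-up via $(\zeta,\theta)$ resolves this by absorbing the singular behaviour of $c$ into $\theta$ and restoring nondegeneracy of the rescaled system. A secondary technical point is ensuring uniform nondegeneracy across the whole interval $\theta\in[-1,1]$, especially at the endpoints $\theta=\pm 1$ where the optimizer degenerates to an ER graphon $g\equiv e$, together with careful bookkeeping of higher-order corrections in the $\tilde t$ constraint (which is quadratic in block-value differences) so that the $O(\zeta^2)$ determination of $D$ is self-consistent.
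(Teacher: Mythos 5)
The rescaled IFT argument in your Step 5 analogue is sound and close in spirit to what the paper does (the paper works with the unrescaled $(a,b,c,d)$ and checks directly that the $2\times 2$ minor $\partial_e a\,\partial_{\tilde t}d - \partial_e d\,\partial_{\tilde t}a$ stays bounded away from zero, which is essentially your nondegeneracy claim in different coordinates; your polar blow-up via $(\zeta,\theta)$ is a clean way to see the same thing). Your block formulas for $e$, $\tilde t$, and the leading-order check that $A=B=2$, $c=\tfrac12(1-\theta)$ solve the constraints are all correct.

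The genuine gap is your opening move: ``reduce to the four-parameter bipodal family $(a,b,c,d)$, using the structural results on entropy-optimal graphons\ldots{} (including those proved earlier in the paper) to justify this reduction.'' No such result exists in the region $e\approx\tfrac12$, $\tilde t$ small. Section~2 of the paper only gives multipodality of stationary graphons (finitely many degree values), and the prior bipodality result from \cite{KRRS1} requires $\tilde t = o((e-\tfrac12)^2)$, which excludes exactly the regime $\zeta\to 0$ with $\theta$ bounded that you want. Establishing that the optimizer is bipodal near $(\tfrac12,0)$ is the bulk of the theorem, and the paper devotes Steps~1--4 of its proof to it: estimating the Lagrange multipliers $\alpha,\beta$ to show $k(z)=z$ has at most three solutions near $\tfrac12$ (so at most tripodal); an entropy expansion in even moments $\mu_{2k}$ of $g-\tfrac12$ to show the deviation $\delta g$ from a degree-determined kernel is $O(\zeta^3)$ in $L^2$ and the variance of $(d(x)-\tfrac12)^2$ is $O(\zeta^6)$; and a cubic-root argument on $k(z)-z$ that rules out the potential small third pode. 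Your proposal has no substitute for these steps. Relatedly, your closing sentence---that optimality ``would follow from negative-definiteness of the constrained Hessian and an entropy comparison at order $\zeta^2$ against any competing bipodal or higher-podal branch''---is doing all the hard work by fiat: a local Hessian check at the bipodal critical point cannot exclude distant multipodal competitors, and the genuine comparison requires the global moment estimates above (in fact the relevant order is $\zeta^4$--$\zeta^6$, not $\zeta^2$). Your noted ``secondary technical point'' about uniform nondegeneracy across $\theta\in[-1,1]$ is real, but it is the lesser of the two issues.
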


In previous work \cite{KRRS1} we had proven that, for $e \ne 1/2$ and $\tilde t$
sufficiently small, there is a unique optimal graphon that is bipodal. Theorem
\ref{main2} bridges the gap between the regions $e<1/2$ and $e>1/2$ and shows that 
there is a single phase just above the entire \ER{} curve $\tilde t = 0$. 

Now consider what happens along the line $e=1/2$. When $\tilde t$ is small, Theorem
\ref{main2} implies that there is a unique optimal graphon that is bipodal. 
Uniqueness implies that the four parameters satisfy $a+b=1$ and $c=d=1/2$. At some
point on the line a graphon of this form ceases to be optimal, since 
Theorem \ref{main1} says that,
for $e = 1/2$ and $\tilde t$ sufficiently large, there are two optimal graphons,
one clique-like and the other anti-clique-like. At some point in between there must 
be a point of non-analyticity, where symmetry is broken and 
the structure of the optimal graphon changes.

Determining the exact nature of this bifurcation point is beyond our current methods, as
it is conceivable that the optimal graphon might change discontinuously. Instead, we
determine where the symmetric graphon becomes stable 
against {\em small} changes.

\begin{theorem}[Theorem \ref{thm:bifurcation}] \label{main3} 
There is a number $\tilde t^* \approx 0.03727637$ such that
\begin{enumerate} 
\item For all $\tilde t < \tilde t^*$, there is a bipodal graphon with $b=1-a$, $c=d=1/2$, 
that is a local maximizer of the entropy among all bipodal graphons with edge density 
$1/2$ and 2star density $\tilde t + \frac14$.
\item If $\tilde t^* < \tilde t \le 0.0625$, then there exist bipodal graphons with $b=1-a$ and
$c=d=1/2$, but these graphons are not local maximizers. 
\item If $\tilde t > 0.0625$, then there do not exist bipodal graphons with $b=1-a$ and $c=d=1/2$. 
\end{enumerate}
\end{theorem}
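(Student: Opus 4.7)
The plan is to verify part~(3) by direct parametrization and then analyze parts~(1)--(2) via a Hessian computation at the symmetric critical point on the constraint surface.

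For part~(3), writing a bipodal graphon with $b=1-a$ and $c=d=\frac12$ in the usual four-block form gives diagonal block values $a$ and $1-a$, off-diagonal value $\frac12$, and equal block fractions. A direct computation shows $e = \tfrac14 a + \tfrac14(1-a) + \tfrac14 = \tfrac12$ automatically, and the two degree densities $f_1 = \tfrac{a}{2}+\tfrac14$, $f_2 = \tfrac34-\tfrac{a}{2}$ yield
\begin{equation}
\tilde t \;=\; t - e^2 \;=\; \tfrac12 f_1^2 + \tfrac12 f_2^2 - \tfrac14 \;=\; \tfrac14 \bigl(a-\tfrac12\bigr)^2.
\end{equation}
Inverting, $a = \tfrac12 \pm 2\sqrt{\tilde t}$, which lies in $[0,1]$ iff $\tilde t \le 1/16 = 0.0625$; the two sign choices give graphons related by a block swap and are graphon-equivalent. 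This establishes part~(3).

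For parts~(1) and~(2), fix $a_0 = \tfrac12 + 2\sqrt{\tilde t}$ and work in the four-dimensional bipodal parameter space with Shannon entropy $H(a,b,c,d) = c^2 S(a) + (1-c)^2 S(b) + 2c(1-c) S(d)$, where $S(p) = -p\log p - (1-p)\log(1-p)$. Form the Lagrangian $L = H - \alpha e - \beta t$. Using $S'(\tfrac12) = 0$ and $S'(1-a) = -S'(a)$, the conditions $\partial_a L = \partial_b L = \partial_d L = 0$ force $\alpha = -\beta$ and $\beta = S'(a_0)/(a_0-\tfrac12) = \log[(1-a_0)/a_0]/(a_0-\tfrac12)$, after which $\partial_c L = 0$ holds automatically, so the symmetric graphon is a critical point of the constrained entropy. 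Row-reducing the $2\times 4$ Jacobian of $(e,t)$ at this point shows, for $a_0 \ne \tfrac12$, that the tangent plane to the constraint surface $\{e=\tfrac12,\ t=\tilde t+\tfrac14\}$ is cut out by $\delta a = \delta b$ and $\delta a + (4a_0-2)\delta c + \delta d = 0$, spanned by $v_1 = (1,1,0,-1)$ and $v_2 = (2-4a_0,\, 2-4a_0,\, 1,\, 0)$.

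With these preliminaries I would compute the full $4\times 4$ Hessian $\nabla^2 L$ at the symmetric point in closed form (using $S''(p) = -1/[p(1-p)]$ and the explicit second derivatives of $e$ and $t$) and assemble the $2\times 2$ restricted Hessian $M(\tilde t)$ with entries $v_i^\top(\nabla^2 L)v_j$. Local maximality of the symmetric graphon among bipodal graphons with the prescribed constraints is equivalent to $M(\tilde t) \prec 0$; once one verifies $\tr M(\tilde t) < 0$ throughout the range this reduces to $\det M(\tilde t) > 0$. The entries of $M$ are elementary in $a_0$ plus the single logarithmic term in $\beta$, and the program is to show that $\det M > 0$ at $\tilde t=0$ (consistent with Theorem~\ref{main2}), $\det M < 0$ at $\tilde t = 1/16$, and that $\det M$ has a unique sign change $\tilde t^*\in(0,1/16)$; solving the resulting transcendental equation $\det M(\tilde t^*)=0$ numerically yields $\tilde t^* \approx 0.03727637$. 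Parts~(1) and~(2) then follow from the signs of the eigenvalues of $M$ on either side of $\tilde t^*$.

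The main obstacle is the algebraic reduction of $\det M(\tilde t)$ to a form in which the transcendental equation for $\tilde t^*$ is tractable and the uniqueness of the sign change on $(0,1/16)$ is transparent. The criticality, Lagrange multiplier, and tangent-space computations are routine; the crucial analytic step is to rule out additional zeros of $\det M$ on the interval, so that the bifurcation is a single simple eigenvalue crossing, which is what validates the numerical computation of $\tilde t^*$.
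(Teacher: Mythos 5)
Your proposal is correct in outline and would reach the same critical value, but it organizes the computation differently from the paper. The paper avoids Lagrange multipliers and the restricted Hessian entirely: it introduces the step function $v(x)$ adapted to a pode split $c=\tfrac12+\delta$ and writes $g = \tfrac12 + \mu[v(x)+v(y)] + \nu\delta\,v(x)v(y)$. Because $\int v = 0$ and $\int v^2 = 1$ for every $c$, this parametrization makes $e = \tfrac12$ \emph{exactly} and $\tilde t = \mu^2$ \emph{exactly}, so with $\mu=\sqrt{\tilde t}$ fixed, the two remaining parameters $(\delta,\nu)$ are an explicit coordinate chart on the constraint surface; the paper then expands $S$ to order $\delta^2$, maximizes the resulting quadratic over $\nu$, and reads off the sign of the $\delta^2$ coefficient. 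Your plan instead works with the Lagrangian in the ambient four-parameter space, verifies stationarity (which you do correctly: $\alpha=-\beta$, $\beta = S'(a_0)/(a_0-\tfrac12)$, and $\partial_c L = 0$ automatic), row-reduces the constraint Jacobian to get a tangent basis, and then computes the $2\times2$ restricted Hessian $M$. The two are equivalent: your criterion $\det M > 0$ (given $\tr M < 0$, which here follows from $m_{11}<0$) is exactly the condition that, after maximizing over $\nu$, the remaining second derivative in $\delta$ is negative --- a Schur-complement identity. What the paper's route buys is that the constraint surface is parametrized exactly rather than only to first order, so there is no need to worry about curvature terms of the constraint surface entering the Hessian of $L$ (you would have to be a bit careful that $\nabla^2 L$ is the right object precisely because $\alpha,\beta$ have been chosen to make the point stationary, but this is standard). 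What your route buys is that it is entirely mechanical and doesn't require guessing the $v(x)v(y)$ ansatz. Both proposals share the same residual gap that the paper also does not close rigorously: establishing that the transcendental expression (the paper's $\Delta S/\delta^2$, your $\det M$) has a \emph{unique} sign change on $(0,1/16)$. The paper gives the small-$\mu$ asymptotics $(-64\mu^4/3)$ and the logarithmic divergence as $\mu\to\tfrac14$ and locates the crossing numerically; you correctly identify ruling out additional zeros as the crucial unfinished analytic step.
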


\subsection{Background and formalism} 
To put these results about the edge-2star model in context, 
and to explain our foundational results, we review 
some relevant history of research into
ensembles of large dense random graphs. 

Following the publication by
Chatterjee/Varadhan \cite{CV} of the LDP of the Erd\H{o}s-R\'enyi
random graph $G(n,p)$, Chatterjee/Diaconis popularized \cite{CD} the
use of graphons, with `soft' constraints on the densities of several
subgraphs $P_j$, to analyze  exponential
random graph models (ERGMs).
The graphon formalism of Lov\'asz and coauthors
\cite{BCL,BCLSV,LS1,LS2,LS3} allows graphs $G$ on any finite number of
nodes to be incorporated, as `checkerboard graphons' $g^G$, in the
space $\www$ of their `infinite node limits', graphons; for an
in-depth presentation we recommend \cite{Lov}.

The LDP is
expressed in terms of probability distributions $\calP_n$ on $\www$ (and
the closely connected $\widetilde {\calP_n}$ on reduced graphons
$\widetilde \www$), associated with a sequence $p_n$ of discrete
distributions on the sets $\calG_n$ of graphs on $n$ nodes.
Given their focus on ERGMs, in \cite{CD} the constraints on the
subgraphs $P_j$ were naturally implemented by the choice of exponential
distributions for $p_n$:
\begin{equation}\label{eq:distribution on finite graphs}
  p_n(G)=e^{n^2 T({g^G})-\psi_n},
\end{equation}
where $\psi_n$ is a normalizing constant,
\begin{equation}T(g)=\sum_{j=1}^k \beta_j \tdens_j(g)
\end{equation}
is a function on graphons, 
and $\tdens_j(g)$ is the density of $P_j$ in the graphon $g$.  The
parameters of the model are the $\beta's$, and the constrained
graphons for given parameter values are the graphons $g$ that
optimize the functional $T(g)+S(g)$, where 
\begin{equation}
S(g) := \int_0^1\!\!\int_0^1 H(g(x,y)) dx \, dy \qquad \hbox{ and } H(u) := -\frac12
(u \ln(u) + (1-u)\ln(1-u)).\end{equation}
The quantity $S(g)$, which we call the Shannon entropy of the graphon $g$, is
closely related to the LDP rate function $\tilde I_p(g)$ of \cite{CD}.
Specifically, $S(g) = \frac12 \ln(2) - \tilde I_{1/2}(g)$.

Both \cite{CV} and \cite{CD} emphasize the difficulty of
accessing/determining nonconstant optimal graphons; the formalism 
easily leads to Erd\H{o}s-R\'enyi optima. (See for instance the open
questions section 4.8, in \cite{CV}.) It was to overcome this
tendency that a variant graphon model was introduced in \cite{RS1,RS2}
using hard rather than soft constraints on the subgraphs $P_j$; the
parameters were chosen to be the densities of the $P_j$
and the role of the discrete distribution $p_n$ on $\calG_n$ was
replaced by a two-step process. Then the appropriately
constrained graphons for given values of the parameters are
characterized as those $g$, with the given parameter densities, which
optimize $I(g)$. (See \cite{DL} for a connection to large
deviations for $G(n,m)$.)

This modified approach to parametric graphon models achieved the
initial goal of \cite{RS1,RS2}, the determination of a fully explicit
{\em nonconstant and unique} optimizer for each constraint on a line
in the edge-triangle model. The goal then expanded.  In \cite{CD}
(indeed already in \cite{CV}) attention was drawn to singular behavior
(`phase transitions') that appeared as the model parameters were
varied. Our extended goal was to determine a `phase', an open set of
parameters, with a unique, optimizing graphon associated to each
point, which moreover responds smoothly with variation of the
parameters. (Note that Erd\H{o}s-R\'enyi graphs are automatically
represented by constraint parameters on a curve in parameter space,
and from smoothness cannot be contained in a phase.) This took a few years to
accomplish but was obtained \cite{KRRS2} in a broad class of models:
constraints on edges and any one other graph, $P$. Finally, after
another few years, we determined \cite{NRS3} a `transition', a pair of
 phases separated by a transition curve. We emphasize that
in these models with hard constraints such transitions represent sharp
structural changes in the `typical' large graph as the parameters
vary, where typical means all but exponentially few as the node number
diverges.

An important lesson learned was that, as in the more general subject
of deviations in $G(n,p)$ from which this all stems \cite{CV}, in
analyzing our deviations it is significant whether we are dealing with
an upper tail or lower tail; for a model with fixed constraints on the
density of edges and one other graph $P$, it is significant whether
the density for $P$ is larger or smaller than it is for
Erd\H{o}s-R\'enyi graphs with the same edge density. (This is a very
large subject; for a good overview we recommend \cite{Ch}.
For a particularly relevant connection to this paper
see \cite{NRS2}, and references therein.) To prove the more detailed
results such as phase transitions required focusing on a narrower
range of models, edge-triangle for lower tail features and edge-2star
for upper tail. In the edge-triangle model we recently determined
\cite{NRS4} a `symmetric' phase which could be distinguished by an
order parameter, and, in the present paper, in the edge-2star model we
determine a discontinuous transition.

\subsection{Foundational results}

Our  goal is to analyze large graphs with hard constraints on the
densities of a number of subgraphs, typically the density $e$ of edges and 
$t_P$ of another subgraph $P$. If $P$ has $m$ vertices and $\ell$ edges, with 
edge $e_k$ connecting vertices $s_k$ and $f_k$, then the density of $P$ associated
with a graphon $g$ is given by the functional
\begin{equation}
\tdens(g) := \int \prod_{k=1}^\ell g(x_{s_k},x_{f_k}) d^mx. 
\end{equation}
If we are considering multiple subgraphs $P_i$, then we will refer to the 
density function for $P_i$ as $\tdens_i(g)$ and a typical value of this functional
as $t_i$. 

The key tool for counting finite graphs with subgraph densities in a given range is 
the LDP of Chatterjee and Varadhan \cite{CV}:
\begin{theorem}\label{thmCV} For any closed set $\tilde{F}
  \subseteq \widetilde{\www}$, and using the notation $|A^n|$ for the
  number of graphs on $n$ nodes whose checkerboard graphons $g^G$
lie in $A$, we have
  \begin{equation}{\lim \sup}_{n\to \infty} \frac{1}{n^2}
    \ln(|\tilde{F}|)\le {\sup}_{\tilde{g} \in \tilde{F}} \ S(\tilde{g})
\end{equation}
and for any open set $\tilde{U}\subseteq \widetilde{\www}$,
\begin{equation}
  {\lim \inf}_{n\to \infty} \frac{1}{n^2}\ln(|\tilde{U}|)\ge
  {\sup}_{\tilde{g}\in \tilde{U}} \ S(\tilde{g}).
 \end{equation} 
  \end{theorem}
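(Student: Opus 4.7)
The plan is to derive Theorem \ref{thmCV} directly from the large deviations principle for $G(n,p)$ established in \cite{CV}. First I would observe that the uniform distribution on the $2^{\binom{n}{2}}$ labeled graphs on $n$ nodes coincides with $G(n,1/2)$, so that
\begin{equation}
|\tilde F^n| \;=\; 2^{\binom{n}{2}}\,\mathbb{P}(g^G \in \tilde F),
\end{equation}
with $G \sim G(n,1/2)$. This converts the counting problem into a probability estimate on $\widetilde{\www}$ in the cut-metric topology, which is precisely where the CV LDP lives.

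Next I would apply the CV LDP for $G(n,1/2)$: for closed $\tilde F$ and open $\tilde U$ in $\widetilde{\www}$,
\begin{align}
\limsup_{n\to\infty}\frac{1}{n^2}\ln \mathbb{P}(g^G\in \tilde F) &\le -\inf_{\tilde g\in \tilde F}\tilde I_{1/2}(\tilde g), \\
\liminf_{n\to\infty}\frac{1}{n^2}\ln \mathbb{P}(g^G\in \tilde U) &\ge -\inf_{\tilde g\in \tilde U}\tilde I_{1/2}(\tilde g).
\end{align}
Taking logs in the counting identity, the prefactor $2^{\binom{n}{2}}$ contributes $\tfrac{1}{2}\ln 2$ per $n^2$, and combining with the identity $S(g)=\tfrac{1}{2}\ln 2 - \tilde I_{1/2}(g)$ recalled in the paper yields both inequalities claimed in the theorem.

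The remaining content is therefore the CV LDP itself, and the main obstacle is its lower bound: one must produce at least $\exp(n^2(S(\tilde g)-o(1)))$ distinct graphs whose checkerboard graphons lie in any cut-metric neighborhood of a given $\tilde g$. The standard route approximates $\tilde g$ by a step-function graphon (via Szemer\'edi regularity), constructs random graphs with independent biased edges on each block, and counts them with Stirling's formula, whose entropic cost matches $S(\tilde g)$ up to the target approximation error. The upper bound is easier: cover $\tilde F$ by finitely many small cut-metric balls and control each via Stirling, using the counting lemma to ensure graphs in the same ball have close subgraph densities. A minor technical point is that one works on the quotient $\widetilde{\www}$ modulo vertex relabeling, but since each isomorphism class contains at most $n!$ labeled graphs, this contributes only an $O(n\ln n)$ correction that is absorbed into the $o(n^2)$ error.
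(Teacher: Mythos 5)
The paper does not prove this theorem; it is the Chatterjee--Varadhan large deviations principle, stated here as a recalled result and cited directly to \cite{CV}. Your reduction of the counting form to the probability form is the right observation and is carried out correctly: the uniform measure on labeled graphs on $n$ nodes is $G(n,1/2)$, so $|\tilde F^n| = 2^{\binom{n}{2}}\mathbb{P}(g^G\in\tilde F)$, the prefactor contributes $\frac{1}{2}\ln 2 + o(1)$ per $n^2$, and combining with $S(\tilde g)=\frac{1}{2}\ln 2 - \tilde I_{1/2}(\tilde g)$ converts $-\inf_{\tilde F}\tilde I_{1/2}$ into $\sup_{\tilde F}S$ (and likewise for $\tilde U$). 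That part of what you wrote is exactly the standard bridge between the probabilistic statement in \cite{CV} and the counting statement quoted here, and it is the substance of the theorem as used in this paper.

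Where you go further --- sketching a proof of the Chatterjee--Varadhan LDP itself --- you are reproving a cited result, which is more than the paper attempts. Your sketch is broadly in the right spirit but imprecise in places: \cite{CV} relies on the \emph{weak} (Frieze--Kannan) regularity lemma rather than Szemer\'edi regularity, and the remark that ``the upper bound is easier'' is debatable; in \cite{CV} the upper bound requires exponential tightness and a covering argument and is generally regarded as the more delicate direction, while the lower bound is the more constructive block-model counting you describe. None of this affects the correctness of the theorem statement or your reduction; it just means your appendix-level sketch should not be read as a substitute for the argument in \cite{CV}.
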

To apply this theorem to graphs with constraints on subgraphs $P_1, \ldots, P_k$, 
we merely take $\tilde F$ and $\tilde U$ to be sets of graphons whose densities 
$(\tdens_1(g), \ldots, \tdens_k(g))$ lie in open and closed subsets of $\R^k$. 
We say that a collection $(t_1,\ldots,t_k)$ is {\em achievable} if there exists at least
one graphon $g$ with $(\tdens_1(g),\ldots,\tdens_k(g)) = (t_1,\ldots,t_k)$.

Next we define the Boltzmann entropy. If we are constraining the densities of 
$k$ subgraphs $P_1, \ldots P_k$, let $\displaystyle Z^{n,\delta}_{t_1,\ldots,t_k}$ 
be the number of simple graphs $G$
such that the density of each $P_i$ is in the interval $(t_i-\delta,t_i+\delta)$.
Let $\displaystyle B^{n,\delta}_{t_1,\ldots,t_k}={\ln(Z^{n,\delta}_{t_1,\ldots,t_k})}/{n^2}$ and
consider
\begin{equation}\label{Blimit}
  \lim_{\delta\to 0^+}\lim_{n\to \infty}B^{n,\delta}_{t_1,\ldots,t_k}.
  \end{equation}

  The double limit exists, defining $B_{t_1,\ldots,t_k}$, and there is a
  variational characterization of it, proven using the LDP. The following 
  is a generalization of results proven in \cite{RS1, RS2} (first for
  edges and triangles, then for edges and one other subgraph). 

\begin{theorem}\label{thmBoltzmann}
  For any achievable $k$-tuple $(t_1,\ldots,t_k)$, the limit (\ref{Blimit})
  defining $B_{t_1,\ldots, t_k}$ exists and equals $\max S(g)$, where the
  maximum is over all graphons $g$ with $(\tdens_1(g),\ldots,\tdens_k(g)) = (t_1,\ldots,t_k)$.
\end{theorem}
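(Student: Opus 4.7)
The plan is to sandwich $Z^{n,\delta}_{t_1,\ldots,t_k}$ between graph counts for a nested pair of constraint regions in the reduced graphon space, apply the two halves of Theorem \ref{thmCV}, and then pass to $\delta\to 0^+$ using compactness of $\widetilde\www$ together with upper semicontinuity of $S$. For each $\delta>0$, set
\[
\tilde F_\delta = \{\tilde g\in\widetilde\www : |\tdens_i(\tilde g)-t_i|\le \delta \text{ for all } i\}, \qquad
\tilde U_\delta = \{\tilde g\in\widetilde\www : |\tdens_i(\tilde g)-t_i|< \delta \text{ for all } i\}.
\]
Since each $\tdens_i$ is continuous on $\widetilde\www$ in the cut metric, $\tilde F_\delta$ is closed and $\tilde U_\delta$ is open. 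A graph on $n$ nodes is counted in $Z^{n,\delta}_{t_1,\ldots,t_k}$ exactly when its reduced checkerboard graphon lies in $\tilde U_\delta$, and any such graphon also lies in $\tilde F_\delta$, so $|\tilde U_\delta^n|=Z^{n,\delta}_{t_1,\ldots,t_k}\le |\tilde F_\delta^n|$. Theorem \ref{thmCV} then gives
\[
\sup_{\tilde g\in \tilde U_\delta} S(\tilde g)\ \le\ \liminf_{n\to\infty} B^{n,\delta}_{t_1,\ldots,t_k}\ \le\ \limsup_{n\to\infty} B^{n,\delta}_{t_1,\ldots,t_k}\ \le\ \sup_{\tilde g\in\tilde F_\delta} S(\tilde g).
\]

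Let $\tilde F_0=\{\tilde g\in\widetilde\www : \tdens_i(\tilde g)=t_i \text{ for all } i\}$, which is nonempty by achievability and closed (hence compact) by continuity of the $\tdens_i$. The key step is to show that both outer quantities in the display above converge to $\max_{\tilde F_0}S$ as $\delta\to 0^+$; once this is done, squeezing yields both existence of the iterated limit and the identification $B_{t_1,\ldots,t_k}=\max_{\tilde F_0}S$. That the maximum is attained follows from extracting a convergent subsequence of any maximizing sequence in the compact set $\tilde F_0$ and invoking upper semicontinuity of $S$ (which is inherited from lower semicontinuity of the Chatterjee--Varadhan rate function $\tilde I_{1/2}$). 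The chain of inclusions $\tilde F_0\subseteq \tilde U_\delta\subseteq \tilde F_\delta$ already gives $\max_{\tilde F_0}S\le \sup_{\tilde U_\delta}S\le \sup_{\tilde F_\delta}S$, so what remains is the upper bound $\limsup_{\delta\to 0}\sup_{\tilde F_\delta}S\le \max_{\tilde F_0}S$. For this, choose $\tilde g_\delta\in\tilde F_\delta$ with $S(\tilde g_\delta)$ within $\delta$ of $\sup_{\tilde F_\delta}S$, and use compactness of $\widetilde\www$ in the cut metric (Lov\'asz--Szegedy) to pass to a subsequence converging to some $\tilde g^*$. By continuity of the densities, $\tilde g^*\in \tilde F_0$; by upper semicontinuity of $S$, $\limsup_{\delta}\sup_{\tilde F_\delta}S\le S(\tilde g^*)\le \max_{\tilde F_0}S$.

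The main obstacle is precisely this last subsequence argument: because $S$ is only upper semicontinuous on $\widetilde\www$, one cannot blindly interchange $\lim_\delta$ with $S$, and one cannot hope to control a pointwise limit of near-maximizers $\tilde g_\delta$ directly. The combination of cut-metric compactness of $\widetilde\www$, the closedness of $\tilde F_0$, and the continuity of each $\tdens_i$ is what enables the argument to close. Everything else is a bookkeeping matter of matching the open/closed sides of the LDP with the open definition of $Z^{n,\delta}_{t_1,\ldots,t_k}$.
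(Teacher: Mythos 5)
Your proposal is correct and follows essentially the same route as the paper's proof: sandwich the graph count between the LDP bounds for the open and closed $\delta$-cubes, then close the $\delta\to 0^+$ limit using compactness of $\widetilde\www$, continuity of the $\tdens_i$, and upper semicontinuity of $S$. The only cosmetic difference is that the paper finishes the squeeze with the inclusion $F_\delta\subseteq U_{\delta+\delta^2}$ before separately identifying the limit with $\max S$, whereas you fold both into a single subsequence argument showing $\limsup_{\delta\to 0}\sup_{\tilde F_\delta}S\le \max_{\tilde F_0}S$.
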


The (constrained) graphon $g$ that maximizes $S(g)$ doesn't just determine the {\em number}
of large graphs with subgraph densities close to $t_1, \ldots, t_k$. When the optimal 
graphon is unique, it also determines the form of all but an exponentially small 
fraction of those graphs. The following theorem states precisely what we
 mean when we say that a typical large graph with densities $(t_1,\ldots,t_k)$
looks like $g_0$.

\begin{theorem}\label{thmTypicality}
Let $(t_1,\ldots,t_k)$ be a point in the space of achievable parameter values in a
model with $k$ constrained subgraphs, and suppose that there
is a unique (reduced) graphon
$g_0$ that maximizes $S(g)$ subject to the constraint
$(\tdens_1(g),\ldots,\tdens_k(g)) = (t_1,\ldots,t_k)$.
For any positive constants $\delta$ and  $n$, let
$\mathcal{G}_{\delta,n}$ be the set of
labeled graphs on $n$ vertices with densities $\tdens_i(g)$ in $(t_i-\delta,
t_i+\delta)$ for each $i$. Then, for any $\epsilon>0$,
there exist
positive constants $\delta$, $N$ and $K$ such that, for all $n>N$, the fraction of
graphs in $\mathcal{G}_{\delta,n}$ that are within $\epsilon$ of $g_0$
in the cut metric
exceeds $1 - e^{-Kn^2}$.
\end{theorem}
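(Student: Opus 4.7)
The plan is to apply both halves of Theorem \ref{thmCV} to a carefully chosen bad set and compare the resulting counts. Working throughout in the reduced graphon space $\widetilde{\www}$ with cut metric $d_\square$, let $C := \{g \in \widetilde{\www}: d_\square(g, g_0) \geq \epsilon\}$, a closed subset of the compact space $\widetilde{\www}$, and for each $\delta > 0$ let $C_\delta := \{g \in C: |\tdens_i(g) - t_i| \leq \delta \text{ for all } i\}$. The graphs in $\mathcal{G}_{\delta,n}$ whose reduced checkerboards are \emph{not} within $\epsilon$ of $g_0$ form a subset of the graphs counted by $|C_\delta^n|$, so it suffices to show that for some $\delta > 0$, some $\eta > 0$, and all sufficiently large $n$,
\begin{equation}
|C_\delta^n| \leq e^{n^2(S(g_0) - 3\eta/2)} \quad \text{and} \quad |\mathcal{G}_{\delta,n}| \geq e^{n^2(S(g_0) - \eta/2)},
\end{equation}
after which dividing gives the desired fraction bound with $K = \eta$.

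The core step is a strict gap between $S(g_0)$ and the supremum of $S$ over competitors, using three standard facts: Lov\'asz compactness of $(\widetilde{\www}, d_\square)$, cut-metric continuity of the density functionals $\tdens_i$, and upper semicontinuity of $S$ (which follows from lower semicontinuity of the rate function $\tilde I_{1/2}$ in \cite{CV}). First, $S$ attains its supremum $\alpha$ on the compact set $C_0 := C \cap \{g: \tdens_i(g) = t_i \text{ for all } i\}$ at some $g^* \in C$. Since $g^* \neq g_0$ yet satisfies the exact density constraints, uniqueness forces $\alpha = S(g^*) < S(g_0)$; set $\eta := \tfrac{1}{3}(S(g_0) - \alpha) > 0$. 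Second, I claim $\sup_{C_\delta} S \to \alpha$ as $\delta \to 0^+$: any near-optimizing sequence $g_n \in C_{1/n}$ has a cut-metric subsequential limit $g^\infty$ with $\tdens_i(g^\infty) = t_i$ by continuity, hence $g^\infty \in C_0$, and upper semicontinuity then gives $\limsup S(g_n) \leq S(g^\infty) \leq \alpha$. Thus some $\delta > 0$ satisfies $\sup_{C_\delta} S \leq S(g_0) - 2\eta$.

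The two bounds now follow from Theorem \ref{thmCV}: its upper bound applied to the closed set $C_\delta$ gives $|C_\delta^n| \leq e^{n^2(S(g_0) - 2\eta + o(1))}$, while its lower bound applied to the open set $V_\delta := \{g: |\tdens_i(g) - t_i| < \delta\}$, which contains $g_0$ and hence has $\sup_{V_\delta} S \geq S(g_0)$, gives $|\mathcal{G}_{\delta,n}| = |V_\delta^n| \geq e^{n^2(S(g_0) + o(1))}$. For sufficiently large $n$ the $o(1)$ errors are absorbed into $\pm \eta/2$ shifts, completing the argument. The main subtlety is that $S$ is only upper, not fully, continuous in the cut metric: this is exactly what is needed to bound $\sup_{C_\delta} S$ from above, but it forces the compactness-plus-subsequence argument instead of a direct continuity-of-sup step, and the whole approach would collapse without cut-metric continuity of the density functionals $\tdens_i$ together with Lov\'asz compactness of the ambient space.
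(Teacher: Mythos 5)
Your proposal is correct and follows essentially the same approach as the paper: establish a strict entropy gap $S(g_0) - \alpha > 0$ between $g_0$ and competitors in the bad set using compactness of $\widetilde{\www}$, continuity of the $\tdens_i$, and upper semicontinuity of $S$; then apply the closed-set upper bound of Theorem \ref{thmCV} to the bad set and the open-set lower bound to $V_\delta$, and divide. The only cosmetic differences are the bookkeeping of the margins and that the paper explicitly notes up front the degenerate case where the bad set with exact densities (your $C_0$) is empty — your argument handles it implicitly via compactness, but it would be cleaner to dispose of it in one sentence, since otherwise your $\alpha = -\infty$ makes $\eta$ ill-defined.
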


Note that if $S(g_0)=0$, then the number of graphs in $\mathcal{G}_{\delta,n}$ 
for small $\delta$ grows slower than $e^{Kn^2}$. Theorem \ref{thmTypicality}
then implies that, for $\delta$ sufficiently small, {\em all\/} graphs in 
$\mathcal{G}_{\delta,n}$ are within $\epsilon$ of $g_0$.

The organization of this paper is as follows. In Section \ref{sec:old} we review what
has been previously proven about the edge-2star model. In Section \ref{sec:large} 
we consider the situation where the edge density $e$ is close to $1/2$ and the 2star 
density $t$ is close to its maximum and prove Theorem \ref{main1}. In Section 
\ref{sec:small} we study a neighborhood of $(e,\tilde t) = (1/2, 0)$ and prove 
Theorem \ref{main2}. In Section \ref{sec:bifurcation} we study the stability of 
the graphons found in Section \ref{sec:small} and prove Theorem \ref{main3}. 
Finally, 
in the Appendix we prove the foundational Theorems \ref{thmBoltzmann} and 
\ref{thmTypicality}.

\section{Old results about the edge-2star model} \label{sec:old}

In this section we review some facts about optimal graphons in the edge-2star model.
For detailed proofs, see \cite{KRRS1}.

Let 
\[ d(x)  = \int_0^1 g(x,y) dy \]
be the degree function of the graphon $g$. The 2star density is then 
\[ t = \int_0^1 d(x)^2 \, dx. \]
We also define the reduced 2star density 
\[ \tilde t = t-e^2 = \int_0^1 (d(x)-e)^2 \, dx. \]
The minimum value of $\tilde t$ is obviously zero, and is achieved when 
$d(x)$ is constant.
Among constant-degree graphons with edge density $e$, 
the entropy maximizer is the (constant) 
Erd\H{o}s-R\'enyi graphon $g(x,y)=e$.

The maximum value of $t$ depends on $e$. 
When $e > 1/2$, the maximum value of $t$ is $e^{3/2}$ and 
is achieved by a clique. This is a graphon that is 1 on a square $I \times I$, 
where $I$ is an interval of width $\sqrt{e}$, and is zero everywhere else. 
When $e < 1/2$, however, the 
maximum value of $t$ is $(1-e)^{3/2} + 2e-1$ and is achieved by an anti-clique. This is a graphon
that is equal to 0 on a square of side $\sqrt{1-e}$ and is 1 everywhere else. 
When $e=1/2$, the maximum value of $t$ is $\sqrt{2}/4 \approx 0.3535$ and is achieved by either a clique or an anti-clique, in either case with the interval having 
width $\sqrt{2}/2$. 

\begin{figure}
\center{
\includegraphics[width=3in]{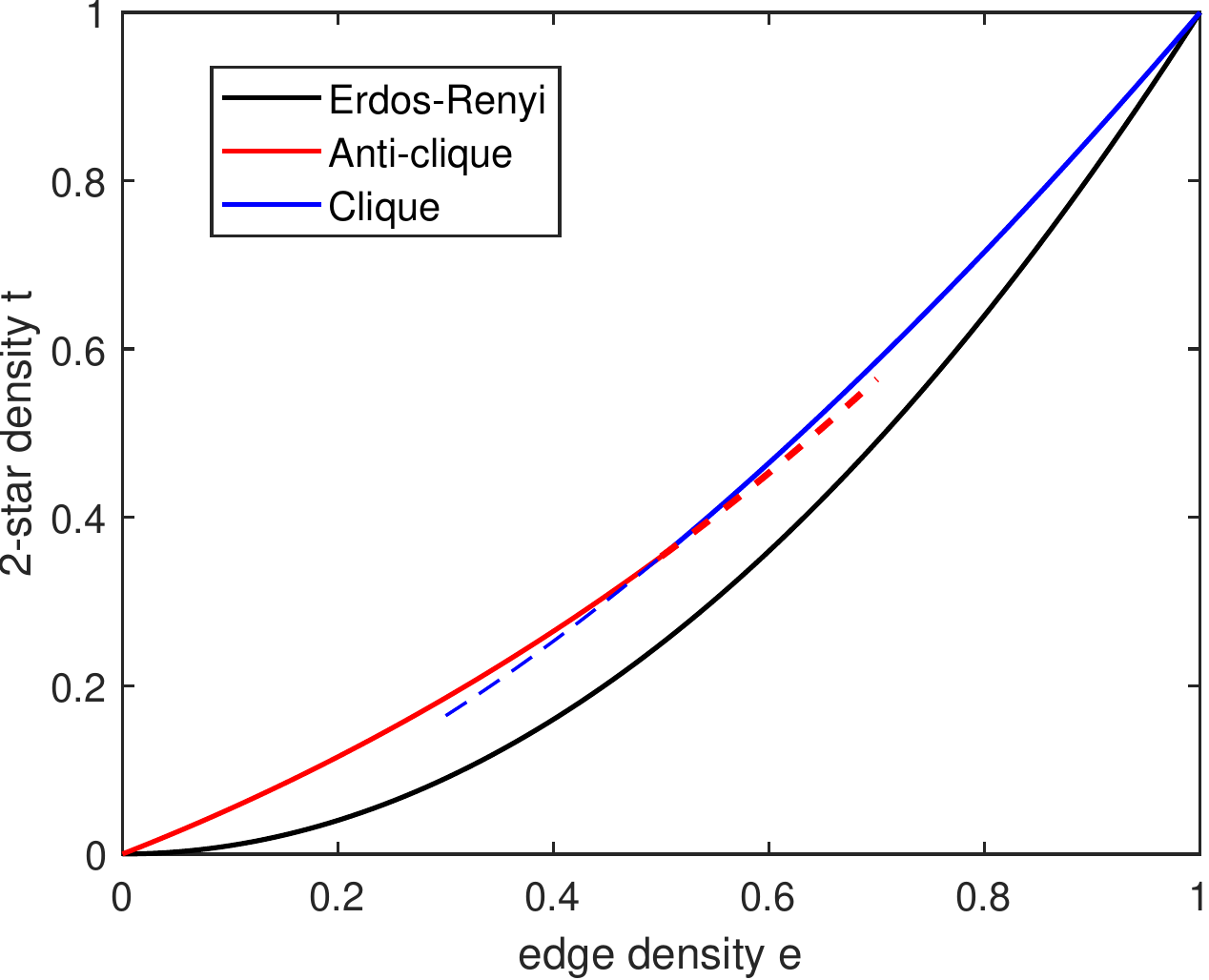}}
\caption{Possible values of $(e,t)$ in the edge-2star model.}\label{Fig-L1}
\end{figure}

If we replace a graphon $g$ with $1-g$, then this changes $e$ to $1-e$, but does not change $\tilde t$ or the entropy $S$. Applying the symmetry to an optimal graphon for given
values $(e, \tilde t)$ gives an optimal graphon with values $(1-e,\tilde t)$.
The possible values of the
edge and 2star densities are more cleanly expressed in terms of $\tilde t$ rather than
$t$, as in Figure \ref{Fig-L2}.

\begin{figure}
\center{
\includegraphics[width=3in]{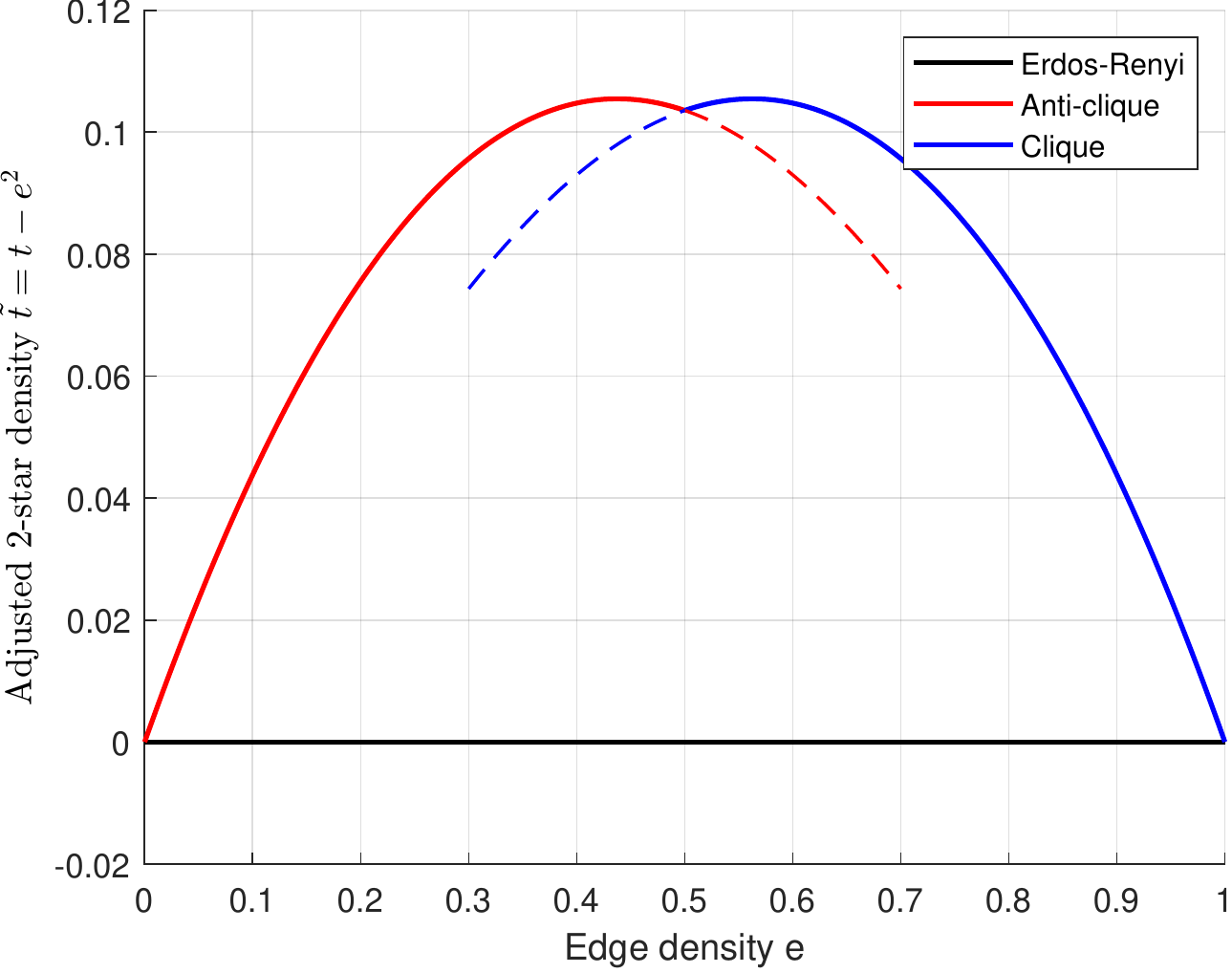}}
\caption{Possible values of $(e,\tilde t)$ in the edge-2star model.}\label{Fig-L2}
\end{figure}

At a stationary point of the entropy, the degree 
function $d(x)$ determines the graphon via the equation 
\begin{equation}\label{eq1} 
g(x,y) = \frac{1}{1 + \exp(-(\alpha+\beta (d(x)+d(y))))},
\end{equation} 
where $\alpha$ and $\beta$ are Lagrange multipliers, with $dS = \alpha de + \beta dt$ as we vary the graphon in arbitrary ways.
By integrating over $y$ we get the self-consistency equation 
\begin{equation}\label{eq2} 
d(x) = \int_0^1 \frac{dy} {1 + \exp(-(\alpha+\beta (d(x)+d(y))))}. 
\end{equation}
That is, the only possible values of $d(x)$ are solutions to the equation 
\begin{equation}\label{eq3} 
z = k(z) : = \int_0^1 \frac{dy} {1 + \exp(-(\alpha+\beta (z+d(y))))}. 
\end{equation}
Both sides of equation (\ref{eq3}) are analytic functions of $z$, so there can only be a finite number of solutions. This implies that all graphons that are 
stationary points of the constrained entropy functional are multipodal. 

\section{Optimal graphons when $\tilde t$ is large}\label{sec:large}

We say that a graphon is clique-like if its degree function is $L^1$-close to a step function with values $\sqrt{e}$ and 0, 
and is anti-clique-like if its degree function is $L^1$-close to a step function with values $1-\sqrt{1-e}$ and 1. As we approach
the upper boundary, all graphons must be clique-like or anti-clique-like, since otherwise we could take a limit as $t$ approaches
the maximum and get a $t$-maximizing graphon that isn't a clique or an anti-clique. 
In particular, all of the entropy-maximizing
graphons in a neighborhood of $(e, \tilde t) = \left (\frac12, \frac14(\sqrt{2}-1)\right )$ must be clique-like or anti-clique-like. 
The two sets are related by the $g \leftrightarrow 1-g$ symmetry, so it is 
sufficient to study clique-like graphons. 

Let $g$ be a clique-like graphon that is a stationary point of the entropy. If we increase the size $c \approx \sqrt{e}$ of the pode(s) with 
degree function close to $\sqrt{e}$ at the expense of those with degree function close to $0$, then we do not change the 
set of values achieved by $H(g(x,y))$. We only change the area of the regions where each value is achieved. This means that
the change in the entropy (per change in $e$ or $t$) is bounded by a multiple of the existing entropy, which goes to zero
as we approach the upper boundary. That is, with this move we must have
\begin{equation} \alpha \frac{de}{dc} + \beta \frac{dt}{dc} = \frac{dS}{dc} = o(1). \end{equation}
It is easy to check that $\frac{dt}{dc} \approx \frac32 \sqrt{e} \frac{de}{dc}$, so 
\begin{equation}\label{a-over-b} \frac{\alpha}{\beta} = -\frac32 \sqrt{e} + o(1) \end{equation}
Note that $\beta$ is negative, as the entropy decreases as we approach the upper boundary, so $\alpha$ is positive. Both parameters
diverge as we approach the upper boundary. 

\begin{theorem}\label{thmCliAnticli} There is an open set $U$ in $(e,
  \tilde t)$-space containing a segment of
the line $e=\frac12$ with $\tilde t$ just below its maximum of $\frac{\sqrt{2}-1}{4}$,
such that
\begin{itemize} 
\item When $e>\frac12$, the entropy-optimizing graphon is clique-like, and 
\item When $e < \frac12$, the entropy-optimizing graphon is anti-clique-like.  
\end{itemize}
\end{theorem}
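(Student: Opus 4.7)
The plan is to use the $g\mapsto 1-g$ symmetry to reduce the theorem to a single monotonicity statement and then to verify that monotonicity from the asymptotic Lagrange relation \eqref{a-over-b}. Let $\sigma(e,\tilde t)$ denote the supremum of $S(g)$ over clique-like graphons with parameters $(e,\tilde t)$. Since $g\mapsto 1-g$ carries clique-like graphons to anti-clique-like graphons and sends $(e,\tilde t)\mapsto(1-e,\tilde t)$ while preserving $S$, the analogous supremum over anti-clique-like graphons equals $\sigma(1-e,\tilde t)$. As observed just before the theorem statement, every entropy maximizer in a neighborhood of $(\tfrac12,\tfrac14(\sqrt 2-1))$ belongs to one of these two families, so it suffices to show that $\sigma(e,\tilde t) - \sigma(1-e,\tilde t)$ has the sign of $e-\tfrac12$ on an open set. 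For this it is enough to prove
\[
\left.\frac{\partial\sigma}{\partial e}\right|_{\tilde t}\!\!\left(\tfrac12,\tilde t\right) > 0
\]
for every $\tilde t$ in a suitable range just below $\tfrac14(\sqrt 2-1)$.

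To compute this derivative I would invoke the envelope identity $dS = \alpha\,de+\beta\,dt$ at the clique-like maximizer. Using $t=\tilde t+e^2$, so that $dt = 2e\,de + d\tilde t$, one obtains $\partial\sigma/\partial e|_{\tilde t} = \alpha+2e\beta$. Substituting \eqref{a-over-b} in the form $\alpha=\beta\bigl(-\tfrac32\sqrt e + o(1)\bigr)$ as $(e,\tilde t)$ approaches the upper boundary yields
\[
\left.\frac{\partial \sigma}{\partial e}\right|_{\tilde t} = \beta\!\left(2e - \tfrac{3}{2}\sqrt e\right) + o(\beta).
\]
At $e=\tfrac12$ the parenthetical coefficient equals $1-\tfrac{3\sqrt 2}{4}\approx -0.061<0$, while the discussion preceding the theorem shows $\beta<0$ with $|\beta|\to\infty$ as $\tilde t\to\tfrac14(\sqrt 2-1)$. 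Hence the leading term is strictly positive and eventually dominates the error, giving $\partial\sigma/\partial e|_{\tilde t}>0$ at $e=\tfrac12$ throughout some range of $\tilde t$ just below the maximum. Continuity of this derivative propagates positivity to an open set $U$ containing the desired segment of the line $e=\tfrac12$, and a mean value argument then upgrades the sign at $e=\tfrac12$ to the relation $\sgn\bigl(\sigma(e,\tilde t)-\sigma(1-e,\tilde t)\bigr)=\sgn(e-\tfrac12)$ on $U$, which is exactly the content of the theorem.

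The main obstacle is justifying the envelope step with enough uniformity: one needs clique-like maximizers to exist and to depend smoothly on $(e,\tilde t)$ in the region considered, and the Lagrange multipliers $(\alpha,\beta)$ must satisfy \eqref{a-over-b} with error $o(\beta)$ uniformly in $e$ near $\tfrac12$ as $\tilde t$ approaches its maximum. Existence follows from weak-$*$ compactness within the clique-like class, and the self-consistency structure of Section \ref{sec:old} --- in particular the finite list of degree values allowed by \eqref{eq3} and the explicit formula \eqref{eq1} --- supplies the multipodal rigidity, the regularity of $\sigma$, and the uniform control on $(\alpha,\beta)$ needed to make the envelope calculation rigorous in a full two-sided neighborhood of $e=\tfrac12$.
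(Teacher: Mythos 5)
Your proposal reproduces the paper's own argument: reduce via the $g\leftrightarrow 1-g$ symmetry, use the envelope relation $dS=\alpha\,de+\beta\,dt$ along constant $\tilde t$ to get $\partial\sigma/\partial e=\alpha+2e\beta$, substitute \eqref{a-over-b} to obtain $\beta\bigl(2e-\tfrac32\sqrt e\bigr)+o(\beta)$ with $\beta<0$ and $2e-\tfrac32\sqrt e<0$ near $e=\tfrac12$, and conclude monotonicity of the clique-like entropy in $e$. The only cosmetic difference is that you evaluate the derivative at $e=\tfrac12$ and propagate by continuity, whereas the paper works directly with the sign for $e$ near $\tfrac12$; the substance and the needed uniformity caveats (which you rightly flag) are identical.
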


\begin{proof} Thanks to the $g \leftrightarrow 1-g$ symmetry that changes $e$ to $1-e$ and 
swaps clique-like and anti-clique-like graphons, the second statement is equivalent to the
first, so it is sufficient to prove the first. We henceforth assume that $e>1/2$ and 
that both clique-like and anti-clique-like graphons exist with densities $(e, \tilde t)$.

Let $S_c(e, \tilde t)$ be the maximum entropy achievable by a clique-like graphon and let $S_a(e,\tilde t)$ be the maximum
achievable by an anti-clique-like graphon. Thanks to our $g \leftrightarrow 1-g$ 
symmetry,
\begin{equation}
S_c(e,\tilde t) = S_a(1-e, \tilde t),
\end{equation}  
and in particular $S_c(\frac12, \tilde t) = S_a(\frac12, \tilde t)$. 
If we have the optimal clique-like graphon and move along a line of constant $\tilde t$, 
then $dt = 2e de$, so 
the change in entropy is proportional to $(\alpha +2 e \beta) de$. By equation 
(\ref{a-over-b}), 
\begin{equation} \alpha + 2e \beta \approx \left ( 2e - \frac32 \sqrt{e} \right ) \beta
\approx \left ( 1 - \frac{3\sqrt{2}}{4} \right ) \beta. \end{equation} 
Since $\beta$ is negative and $3 \sqrt{2}/4 > 1$, this quantity is positive, making 
$S_c(e,\tilde t)$ an increasing function of $e$.  
In particular,
\begin{equation}
S_c(e, \tilde t) > S_c(1-e, \tilde t) = S_a(e,\tilde t).
\end{equation} 
That is, the best clique-like graphon has a higher value of $S$ 
than the best anti-clique-like graphon, so the best overall graphon is clique-like.  
\end{proof}

\begin{theorem}\label{thm2}
On the subset of $U$ where $e \ge 1/2$, the optimal clique-like graphon is unique and bipodal. 
\end{theorem}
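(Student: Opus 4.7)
The plan is to deduce bipodality from a count of solutions to the self-consistency equation $z=k(z)$ of Section~\ref{sec:old}, and then to use the implicit function theorem at the clique limit to obtain uniqueness. Since every stationary graphon is multipodal with its degree values lying in the fixed-point set of $k$, it suffices to show that $k$ has exactly two fixed points in $[0,1]$ for clique-like graphons in a neighborhood of the upper boundary, after which the problem becomes finite-dimensional. Together with Theorem~\ref{thmCliAnticli}, which already identifies the entropy-optimizer as clique-like, this will yield both bipodality and uniqueness of the optimal clique-like graphon.

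For the bipodality step I would use the relation $\alpha/\beta=-\tfrac{3}{2}\sqrt{e}+o(1)$ from~(\ref{a-over-b}) together with $|\beta|\to\infty$ near the upper boundary. Because the degree function of a clique-like graphon is $L^1$-close to the step function with values $\sqrt{e}$ and $0$ on sets of measures $\sqrt{e}$ and $1-\sqrt{e}$, the function $k(z)$ in~(\ref{eq3}) approaches a three-plateau step with jumps near $z\approx\tfrac{1}{2}\sqrt{e}$ and $z\approx\tfrac{3}{2}\sqrt{e}$, taking values $0$, $\sqrt{e}$, and $1$ on the three plateaus. Since $\tfrac{3}{2}\sqrt{e}>1$ whenever $e\ge 1/2$, the line $y=z$ meets this profile at exactly two points in $[0,1]$, namely $z\approx 0$ and $z\approx\sqrt{e}$. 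Quantitative control on the transition width (of order $1/|\beta|$) upgrades this limiting count to a genuine open neighborhood, forcing the degree function of a stationary clique-like graphon to take exactly two values, and hence the graphon to be bipodal.

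For uniqueness, I would set up the finite system in the unknowns $(a,b,c,c_1,\alpha,\beta)$ consisting of the two density constraints and the three stationarity identities from~(\ref{eq1}) (one for each block of the $2\times 2$ bipodal graphon), and verify non-degeneracy of its Jacobian at the exact clique after rescaling (for instance, using the variables $-\alpha/\beta$ and $1/\beta$) so as to avoid the divergence of the multipliers. The implicit function theorem then produces a single analytic branch of bipodal clique-like stationary graphons parametrized by $(e,\tilde t)$, and shrinking $U$ if necessary gives uniqueness throughout $U\cap\{e\ge 1/2\}$. The main obstacle is the bipodality step: the limiting fixed-point count must be transferred to a full open set, which requires uniform control over $k$ as a function of the implicitly defined $d$ and the multipliers, and in particular ruling out spurious additional fixed points arising from higher-order corrections in $1/\beta$. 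A secondary difficulty is the Jacobian check at the boundary, where the coincidence $b=c=0$ and the blow-up of $\beta$ make the straightforward system degenerate and necessitate the rescaling above.
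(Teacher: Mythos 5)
Your fixed-point count is wrong, and the error skips exactly the part of the argument that the paper has to work hardest to handle. You claim that the limiting step profile of $k(z)$ meets the line $y=z$ at only two points in $[0,1]$, at $z\approx 0$ and $z\approx\sqrt e$, on the strength of the observation that the upper jump sits at $z\approx\tfrac32\sqrt e>1$. But the lower jump, at $z\approx\tfrac12\sqrt e$, is still inside $[0,1]$, and the line $y=z$ necessarily crosses it: there $k(z)$ rises from $0$ to $\approx\sqrt e$ while the line is at height $\tfrac12\sqrt e$, which lies strictly between those plateau values. So for finite $\beta$ the equation $z=k(z)$ generically has a \emph{third} solution near $z\approx\tfrac12\sqrt e$, and a priori the optimal graphon could be tripodal, with a small pode of degree $\approx\tfrac12\sqrt e$. (This is exactly what the paper's Figure 3 depicts, and its Step 1 explicitly lists $\tfrac12\sqrt e$ among the possible degree values.) Ruling out that middle pode requires a genuinely new argument: the paper shows that if the middle pode had positive measure, then both the entropy contribution and the $\tilde t$-deficit from the region $I_1\times I_2\cup I_2\times I_1$ are linear in its size, forcing the Lagrange multiplier $\beta$ to stay $O(1)$, which contradicts the divergence $|\beta|\to\infty$ near the upper boundary. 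Your proof has no substitute for this, so the bipodality claim is not established.

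There is also a secondary issue of the same flavor at the surviving plateaus: knowing $d(x)$ takes values only \emph{near} $0$ and \emph{near} $\sqrt e$ does not by itself prevent several distinct nearby fixed points at each plateau. The paper closes this by noting that $k'(z)<1$ near those plateaus once the middle pode is gone, so each cluster collapses to a single fixed point. Your appeal to ``quantitative control on the transition width'' gestures at this but doesn't supply the slope estimate. The uniqueness step via the rescaled variational system and the implicit function theorem is in the right spirit and matches the paper's Step 4, but it is moot until bipodality is actually proved.
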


Combined with Theorem \ref{thmCliAnticli}, this says that there is a unique entropy-maximizing graphon when $e>1/2$, and that this 
optimizing graphon is clique-like and bipodal. By the $g \to 1-g$ symmetry, there is a unique entropy-maximizing graphon when
$e < 1/2$, and that graphon is anti-clique-like and bipodal. When $e=1/2$, there are exactly two optimizing graphons, both 
bipodal, one clique-like and one anti-clique-like.

\begin{proof} First note that optimal graphons must exist for each $(e,\tilde t)$, 
thanks to the compactness of the space of reduced graphons and the semi-continuity of the 
Shannon entropy functional $S(g)$. With that in mind,
suppose that $g$ is an optimal clique-like graphon. We will prove properties of $g$ in stages:
\begin{enumerate}
\item The degree function $d(x)$ only takes values close to 0, $\frac12 \sqrt{e}$, or $\sqrt{e}$. 
\item The degree function $d(x)$ only takes values close to 0 or $\sqrt{e}$. 
\item The degree function $d(x)$ only takes two values, one close to 0 and one close to $\sqrt{e}$. That is, $g$ is bipodal.
\item The parameters that define this bipodal graphon are uniquely determined. 
\end{enumerate}

Plugging equation (\ref{a-over-b}) into equations 
(\ref{eq1} --\ref{eq3}) gives 
\begin{eqnarray} \label{3eqs}
g(x,y) &=& \frac{1}{1 + \exp(-\beta(d(x)+d(y) - \frac32\sqrt{e} + o(1)))} \cr 
d(x) & = & \int_0^1 \frac{dy}{1 + \exp(-\beta(d(x)+d(y) - \frac32\sqrt{e} + o(1)))} \cr 
k(z) & = & \int_0^1 \frac{dy}{1 + \exp(-\beta(z+d(y) - \frac32\sqrt{e} + o(1)))}.
\end{eqnarray}

Since $\beta$ is large, the function $\frac{1}{1 + \exp(-\beta(d(x)+d(y) - \frac32\sqrt{e} + o(1)))}$ is close to 1 whenever 
$d(x)+d(y)$ is bigger than $\frac32 \sqrt{e}$, is close to 0 whenever $d(x)+d(y)$ is smaller than $\frac32 \sqrt{e}$, and 
only takes values substantially different from 0 or 1 when $d(x)+d(y)$ is very close to a fixed threshold value that is 
$\frac32 \sqrt{e} + o(1)$. Note that $\sqrt{e} \approx \sqrt{2}/2 > 2/3$, so the threshold is greater than 1. Since the function
$d(y)$ is close to 0 on a set of measure approximately $1-\sqrt{e}$ and close to $\sqrt{e}$ on a set of measure approximately
$\sqrt{e}$, and only takes on other values on sets of small measure, the function $k(z)$
is approximately a step function, as shown in Figure 3. 

\begin{figure}
\center{
\includegraphics[width=3in]{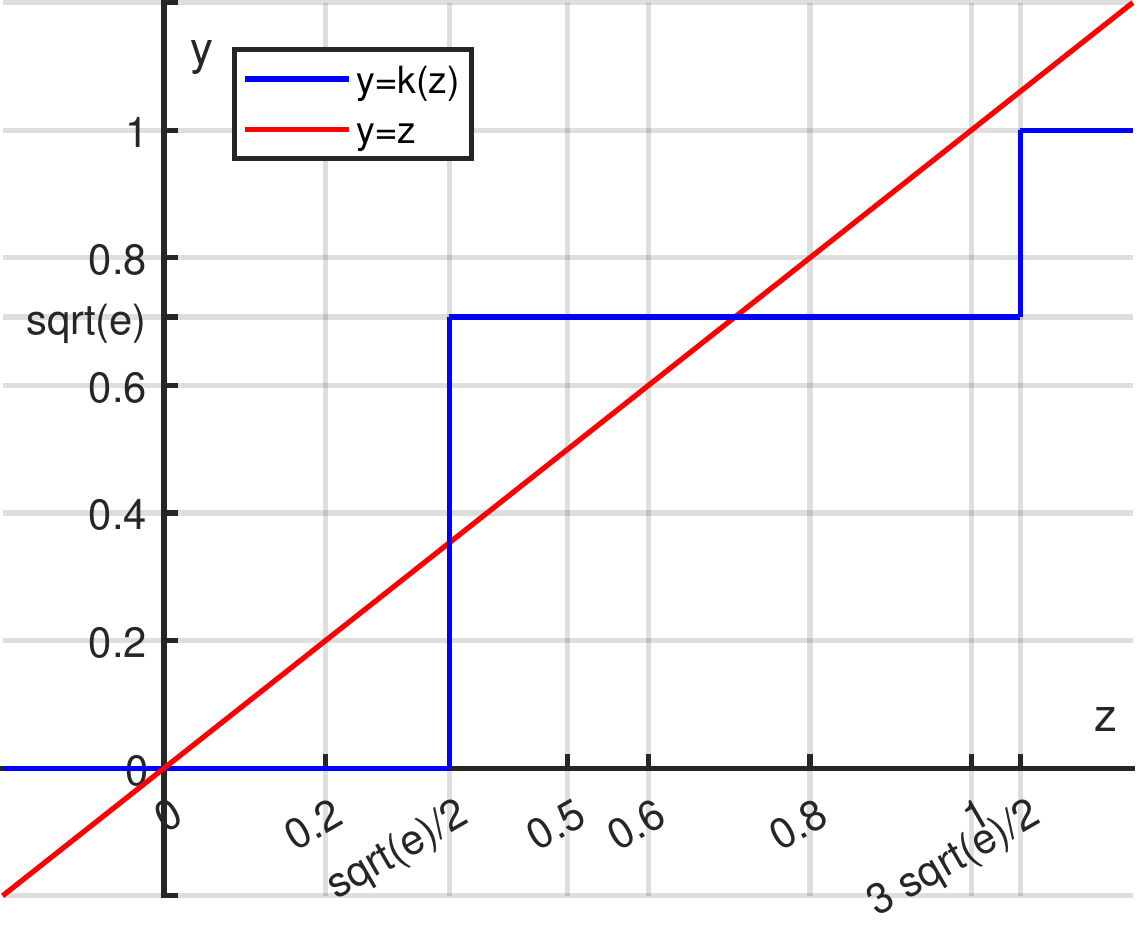}}
\caption{There are only 3 solutions of $k(z)=z$.}
\end{figure}

Of course the function $k(z)$ isn't exactly a step function. However, changing the 
graph of $k(z)$ slightly by having $\beta$ finite and 
making $L^1$-small changes to $d(y)$ can't create intersection points far from where they already are. The only possible values
of $z$ are close to 0, close to $\sqrt{e}/2$ or close to $\sqrt{e}$, as claimed. This completes the first step. 

Let $I_1$ be the union of all the podes where $d(x)$ is close to $\sqrt{e}$, let $I_2$ be the union of all the podes 
where $d(x)$ is close to $\frac12 \sqrt{e}$, and let $I_3$ be the union of all the podes where $d(x)$ is close to 0. By the definition of clique-like, the measure of $I_1$ must be 
close to $\sqrt{e}$, the measure of $I_3$ must be close to $1-\sqrt{e}$ and the measure of 
$I_2$ must be close to 0. 

Note that $d(x)+d(y)$ is above the threshold of $\frac32 \sqrt{e}$ on $I_1 \times I_1$, is close to the threshold on 
$I_1 \times I_2 \cup I_2 \times I_1$, and is below the threshold everywhere else. This implies that $g(x,y)$ is pointwise close to 1 on $I_1 \times I_1$, takes on the average value 
$1/2$ on $I_1 \times I_2$ (in order for the degree function to be $\frac12 \sqrt{e}$) and is
close to zero everywhere else.

Now consider what happens as we vary the size of $I_2$ while keeping $e$ fixed. The entropy associated with the region 
$I_1 \times I_2 \cup I_2 \times I_1$ is linear in the size of $I_2$, as is the extent to which $\tilde t$ (which is the 
variance of the degree function) is reduced from the maximum. That is, $\beta$ must be $O(1)$. However, $\beta$ must diverge as $t$ approaches
the maximum value, as otherwise the graphon would not approach 0 on $I_1\times I_1$ and
1 on $I_3 \times I_3$. This contradiction implies that the size of $I_2$ is in fact zero,
completing the second step.

Next we consider the solutions of $k(z) = z$ near $z=0$ and $z=\sqrt{e}$. Having multiple podes with $d(x)$ close to 0, 
or multiple podes with $d(x)$ close to $\sqrt{e}$, could smear the vertical part of the step function somewhat, but the 
portions of the graph near 0 and $\sqrt{e}$ are nearly flat. (If there were any podes with $d(x)$ close to $\sqrt{e}/2$, 
that would introduce small steps near $z=\sqrt{e}$, insofar as the threshold is $\frac32 \sqrt{e}$, but we just ruled out
the existence of such podes.) Since $k'(z)$ is never greater than 1 near $z=0$ or $z=\frac12 \sqrt{e}$, there can only be one
point near 0 and only one point near $\sqrt{e}$ where $k(z)=z$. 
That is, the graphon must be bipodal, completing the third step.

A bipodal graphon is described by four parameters $(a,b,c,d)$, all between 0 and 1, with 
\begin{equation} 
g(x,y) = \begin{cases} a & x,y < c \cr 
b & x,y > c \cr 
d & x<c<y \hbox{ or } y<c<x\end{cases}
\end{equation}
Since we are looking for clique-like graphons, we want $a \approx 1$, $b \approx 0$, $c \approx \sqrt{e}$, $d \approx 0$. 
We compute the gradient of the edge density, 2star density and entropy with respect to $(a,b,c,d)$ and set 
\begin{equation} \nabla S = \alpha \nabla e + \beta \nabla t. \end{equation}
Those four equations, plus the constraints on $e$ and $t$, give six equations in six unknowns. The system of equations is 
non-degenerate and yields a single family of solutions with $a \approx 1$, $b \approx 0$, $c \approx \sqrt{e}$, and 
$d \approx 0$, namely 
\begin{eqnarray}
a & = & 1 - \delta + O(\delta^2) \cr 
b & = & O(\delta^3) \cr 
c & = & \sqrt{e} + (\frac32 \sqrt{e} -1)\delta + O(\delta^2) \cr 
d & = & \delta + O(\delta^2), 
\end{eqnarray}
where $\delta$ is a small parameter. 
\end{proof}

\section{Above Erd\H{o}s-R\'enyi} \label{sec:small}

We now turn to the bottom of our parameter space, a neighborhood of 
the Erd\H{o}s-R\'enyi curve $t=e^2$, or equivalently $\tilde t=0$. In previous work,
we identified what happened for $e \ne 1/2$ and $\tilde t$ sufficiently small
(where ``sufficiently small'' is $o((e-1/2)^2)$ as $e \to 1/2$). 
We showed that, when $e \ne 1/2$, the optimal graphon is 
bipodal with $c = \frac{\tilde t}{4(e-\frac12)^2} + O(\tilde t^2)$ and 
$d=1-e + O(\tilde t)$. In particular, the degree function is close to $e$ on the large 
pode and $1-e$ on the small pode.  

In this section we bridge the gap between these two regions, proving that the
optimal graphon is unique and bipodal, with parameters that vary smoothly with
$e$ and $\tilde t$, whenever $\tilde t + (e-\frac12)^2$ is small. 

The strategy of proof is a variation of a method we used in \cite{NRS4}
to determine the optimal graphon in the edge-triangle model below the
Erd\H{o}s-R\'enyi curve and when $e \approx 1/2$. 
We begin with an explicit bipodal graphon. 
Using a power-series expansion of the entropy function $H(u)$, we express the
entropy of a graphon in terms of the even moments of $(g(x,y) - \frac12)$. 
By examining the first few moments, we show that an optimal graphon has to be 
close, first in an integral sense and then pointwise, to our model graphon. 
Finally, we use the consistency equation (\ref{eq3}) to show that the optimal
graphon is exactly bipodal and unique. 

\subsection{The ansatz} 

Let 
\begin{equation} 
\zeta = \sqrt{\tilde t + (e - \frac12)^2}, 
\end{equation}
and consider the bipodal graphon $g_0$ with 
\begin{eqnarray}\label{eq:ansatz}
a & = & 1 - e - 2\zeta, \cr 
b & = & 1-e + 2 \zeta, \cr
d & = & 1-e, \cr 
c & = & \frac12 \left (1 - \frac{e-\frac12}{\zeta} \right ). 
\end{eqnarray}
The degree function is exactly $\frac 12 - \zeta$ on the pode of size $c$ and $\frac12 + \zeta$ on the pode of size $1-c$.  

For fixed $e \ne \frac12$, this is the same, to leading order in $\tilde t \ll (e-\frac12)^2$, as what was
previously proven. When $e = \frac12$, this is a symmetric graphon with 
$c=\frac12$ and $b=1-a$. 
Except at $e-\frac12 = \tilde t=0$, the parameters $(a,b,c,d)$ are analytic functions
of $e$ and $\tilde t$. 

\begin{theorem}\label{thm:low-t} For sufficiently small $\zeta$, the entropy-maximizing graphon is 
unique and is well-approximated by the ansatz graphon $g_0$. Specifically, the 
entropy-maximing graphon has 
\begin{eqnarray}\label{what-is-g}
a & = & 1 - e - 2\zeta + O(\zeta^2), \cr 
b & = & 1-e + 2 \zeta + O(\zeta^2), \cr
d & = & 1-e + O(\zeta^2), \cr 
c & = & \frac12 \left (1 - \frac{e-\frac12}{\zeta} \right ) + O(\zeta).
\end{eqnarray}
Furthermore, the exact values of the parameters $a$, $b$, $c$, and $d$ 
are analytic functions of $(e,\tilde t)$ everywhere except at the singular point 
$(\frac12, 0)$. 
\end{theorem}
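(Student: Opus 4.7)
The plan is to execute the strategy sketched just before the theorem statement: compare any competitor to the ansatz $g_0$ using the power-series expansion of $H$, deduce $L^2$-closeness of the optimizer $g^*$ to $g_0$ from moment inequalities, and then promote this to exact bipodality and analyticity via the stationarity equation \eqref{eq3}. Starting from $H(\tfrac12+v) = \tfrac{\ln 2}{2} - v^2 - \tfrac{2}{3}v^4 - \tfrac{16}{15}v^6 - \cdots$, one has
\begin{equation*}
S(g) \;=\; \tfrac{\ln 2}{2} - \sum_{k\ge 1}\frac{4^{k-1}}{k(2k-1)}\int(g-\tfrac12)^{2k}\,dx\,dy,
\end{equation*}
in which every coefficient is strictly positive. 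Integrating the powers of $g_0-\tfrac12$ over the three rectangles defining $g_0$, using the parameters from \eqref{eq:ansatz}, yields $\int(g_0-\tfrac12)^2 = \zeta^2 + \tilde t$ and $\int(g_0-\tfrac12)^{2k} = O(\zeta^{2k})$ for $k\ge 2$, so $S(g_0) = \tfrac{\ln 2}{2} - (\zeta^2+\tilde t) + O(\zeta^4)$.

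The key universal inequality I would establish next is
\begin{equation*}
\int(g(x,y)-\tfrac12)^2\,dx\,dy \;\ge\; \zeta^2+\tilde t
\end{equation*}
for every graphon of edge density $e$ and 2star density $e^2+\tilde t$, with equality if and only if $g(x,y) = d(x)+d(y)-e$ almost everywhere. Two applications of the conditional variance identity suffice: integrating out $y$ gives $\int(g-\tfrac12)^2 \ge \int(d(x)-\tfrac12)^2 = \zeta^2$, and the residual, which equals $\int (g(x,y)-d(x))^2 \, dx\,dy$, is in turn at least $\int(d(y)-e)^2 = \tilde t$ by integrating out $x$. Comparing $S(g^*) \ge S(g_0)$ with the series and using positivity of every coefficient then forces $\int(g^*-\tfrac12)^2 = \zeta^2+\tilde t + O(\zeta^4)$ and $\int(g^*-\tfrac12)^{2k} = O(\zeta^4)$ for each fixed $k\ge 2$; the equality case of the quadratic bound gives $\|g^* - (d^*(x)+d^*(y)-e)\|_{L^2} = O(\zeta^2)$, where $d^*$ is the degree function of $g^*$.

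To pin down $d^*$, I would feed the affine approximation into the fourth-moment estimate. Writing $w=d-e$ and $\tilde e=e-\tfrac12$, expansion and integration of $(w(x)+w(y)+\tilde e)^4$ gives $2\int w^4 + 8\tilde e\int w^3 +$ (terms fixed by the constraints), so the $O(\zeta^4)$ bound on $\int(g^*-\tfrac12)^4$ forces $2\int w^4 + 8\tilde e\int w^3$ to lie within $O(\zeta^4)$ of its constrained minimum over distributions $w$ with $\int w=0$ and $\int w^2=\tilde t$. The Chebyshev--Markov theory of moment problems (restricting extremizers to supports of size at most three) together with a Lagrange-multiplier analysis identifies this minimizer as the two-point distribution placing masses $c$ and $1-c$ at $\tfrac12\mp\zeta$ given by \eqref{eq:ansatz}; hence $d^*$ lies within $O(\zeta^2)$ of that measure and $\|g^* - g_0\|_{L^2} = O(\zeta^2)$. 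Substituting into \eqref{eq1} then fixes the Lagrange multipliers at $\beta=4+O(\zeta)$ and $\alpha=-2-4e+O(\zeta)$, and a local analysis of the fixed-point map $z\mapsto k(z)$ of \eqref{eq3}---whose graph for these values crosses the diagonal exactly twice, near $\tfrac12\pm\zeta$, transversally at each crossing---combined with the multipodality statement from Section \ref{sec:old} upgrades $g^*$ to exactly bipodal.

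The final step is to solve the six-equation, six-unknown system ($(a,b,c,d,\alpha,\beta)$ against two density constraints and four stationarity conditions) and to verify analyticity. I would pass to the punctured polar-type coordinates $(\zeta,\eta)$ with $\eta=(e-\tfrac12)/\zeta\in(-1,1)$, in which the ansatz has analytic Taylor expansions in $\zeta$ with polynomial-in-$\eta$ coefficients, check nondegeneracy of the Jacobian of the stationarity system at $\zeta=0$ for each fixed $\eta$, and apply the implicit function theorem to obtain a unique analytic family of solutions. The analytic change of coordinates $(\zeta,\eta)\leftrightarrow(e,\tilde t)$ away from $(\tfrac12,0)$ then yields analyticity of $(a,b,c,d)$ in $(e,\tilde t)$ off the singular point, and the leading expansion recovers \eqref{what-is-g}. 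The hardest step is identifying the extremal degree function in the third paragraph: the quadratic moment is insensitive to the shape of $d^*$ once its variance is fixed, so everything hinges on the constrained fourth-moment optimization, whose extremizer depends nontrivially on the ratio $\tilde e/\zeta$, and obtaining error constants that are uniform as $(e,\tilde t)\to(\tfrac12,0)$ from any direction is precisely where the singular behavior at the bifurcation point enters the proof.
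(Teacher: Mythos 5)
Your strategy tracks the paper's quite closely: both expand $H$ around $\tfrac12$, express $S(g)$ as a positive-coefficient series in the even moments $\mu_{2k}=\iint(g-\tfrac12)^{2k}$, use $\mu_2=\zeta^2+\tilde t+\|\delta g\|_2^2$ to force $\delta g$ small, push the fourth-moment comparison to force $(d(x)-\tfrac12)^2$ nearly constant, and finish with the implicit function theorem on the bipodal stationarity system. (Your Lagrange-multiplier values $\beta=4+O(\zeta)$, $\alpha=-2-4e+O(\zeta)$ are the correct ones for the normalization in \eqref{eq1}.) Your appeal to Chebyshev--Markov is also unnecessarily heavy: completing the quartic shows $2\!\int w^4+8\tilde e\!\int w^3=2\nu_4-12\tilde e^2\tilde t-2\tilde e^4$ with $\nu_4=\int(d-\tfrac12)^4$, so the minimizer is identified by the elementary inequality $\nu_4\ge\nu_2^2$, with equality iff $(d-\tfrac12)^2$ is constant, which is exactly how the paper argues.

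The genuine gap is in the passage from ``the degree distribution is $L^2$-close to the two-point ansatz'' to ``the optimizer is exactly bipodal.'' You assert that the fixed-point map $z\mapsto k(z)$ from \eqref{eq3} crosses the diagonal exactly twice, transversally. That is false in general and is not something the moment estimates give you. With $\beta$ near its critical value the diagonal is nearly tangent to the graph of $k$; the paper shows via the negative third derivative of the logistic kernel that $k$ is locally cubic-like and can cross the diagonal \emph{three} times, so that a stationary tripodal graphon with a small third pode near $z=\tfrac12$ is not excluded by multipodality. Indeed your own moment bounds are perfectly compatible with a third pode of mass $c_3=O(\zeta^2)$ and degree $\tfrac12+O(\zeta^2)$, since such a pode contributes only $O(\zeta^6)$ to $\nu_4-\nu_2^2$ and only $O(\zeta^2)$ to $\|d^*-d_0\|_{L^2}$. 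The paper's Step 4 closes this by locating the third root of $k(z)-z$ at $\tfrac12+O(\zeta^2)$ via a cubic mean-value computation and then showing directly that the derivative of the entropy with respect to $c_3$ is of definite (negative) sign of order $\zeta^4$, which forces $c_3=0$. Without that argument (or a substitute), your step from approximate to exact bipodality does not go through, and consequently the implicit-function-theorem step is not anchored to the actual optimizer.
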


\begin{corollary} There is an open set in the $(e, \tilde t)$ plane, whose lower boundary
is the entire open line segment $\tilde t=0$, $0<e<1$, on which the optimizing graphon is
bipodal and unique. On this open set, 
the parameters $(a,b,c,d)$ are analytic functions of $(e,\tilde t)$.
\end{corollary}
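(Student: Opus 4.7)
The plan is to assemble the required open set as the union of the open neighborhood provided by Theorem \ref{thm:low-t} and the open neighborhoods provided by the previous results in \cite{KRRS1}, then verify that the union has the stated properties. This is essentially a gluing argument driven by uniqueness.

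First, Theorem \ref{thm:low-t} furnishes an open set $U_0 \subseteq \{\tilde t > 0\}$ of the form $\{(e,\tilde t) : \zeta < \zeta_0,\ \tilde t > 0\}$ for some $\zeta_0 > 0$, on which the entropy-maximizing graphon is unique, bipodal, and given by parameters analytic in $(e,\tilde t)$. The closure of $U_0$ contains the full segment $\{(e,0) : |e - \tfrac12| < \zeta_0\}$. Second, for each $e_0 \in (0,1)\setminus\{\tfrac12\}$, the analysis of \cite{KRRS1} produces a two-dimensional open neighborhood $V_{e_0} \subseteq \{\tilde t > 0\}$ whose closure contains $(e_0,0)$, on which the optimal graphon is again bipodal and unique with parameters analytic in $(e,\tilde t)$. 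Set $U := U_0 \cup \bigcup_{e_0 \neq 1/2} V_{e_0}$. This is open by construction, lies in $\{\tilde t > 0\}$, and has every point $(e_0,0)$ with $0 < e_0 < 1$ in its closure, so its lower boundary is precisely the open segment $\tilde t = 0$, $0 < e < 1$.

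Uniqueness on $U$ is immediate, since each point of $U$ lies in some piece on which uniqueness has already been shown. For analyticity, the local analytic parametrizations of the optimizer supplied by Theorem \ref{thm:low-t} and by \cite{KRRS1} must agree wherever two pieces overlap, precisely because the optimizer is unique there. Consequently the parameters $(a,b,c,d)$ extend to well-defined analytic functions of $(e,\tilde t)$ on all of $U$.

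The only mild point worth checking is the behavior of the $V_{e_0}$ as $e_0 \to \tfrac12$: the allowed thickness in $\tilde t$ shrinks like $o((e_0-\tfrac12)^2)$, so those neighborhoods alone do not cover a neighborhood of $(\tfrac12, 0)$. This is exactly the gap filled by $U_0$, which by contrast ignores the degenerate behavior at $e = \tfrac12$ by using $\zeta$ as the small parameter. Because $U_0$ extends into the region $|e - \tfrac12| < \zeta_0$ while the $V_{e_0}$ cover every $e_0$ bounded away from $\tfrac12$, together they provide a common open region along the entire Erd\H{o}s--R\'enyi segment. Beyond this bookkeeping there is no real obstacle, as the underlying uniqueness and analyticity statements have already been established upstream.
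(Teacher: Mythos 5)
Your gluing argument — taking the union of the neighborhood from Theorem \ref{thm:low-t} covering $e$ near $\tfrac12$ with the \cite{KRRS1} neighborhoods covering $e$ bounded away from $\tfrac12$, then invoking uniqueness of the optimizer to patch the local analytic parametrizations — is precisely the intended reasoning; the paper states this corollary without a separate proof because it regards exactly this as immediate. Your observation that the \cite{KRRS1} neighborhoods alone degenerate as $e_0 \to \tfrac12$, which is what Theorem \ref{thm:low-t} is designed to repair, is the right point of emphasis.
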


That is, there is a single bipodal phase just above $\tilde t=0$. This has implications
for the edge-triangle model and for all models where we constrain the density of 
edges and another connected graph $H$
whose vertices all have valence 1 or 2. In \cite{KRRS2}, we
proved results about such models for $e \ne 1/2$ 
by relating the change in the number of $H$'s to
changes in the number of 2stars. A similar approach is promising for 
$e \approx 1/2$. However, the estimates become delicate as $e \to \frac12$, so 
we postpone that analysis to a future work. 

\begin{proof}[Proof of Theorem \ref{thm:low-t}]

Any graphon $g$ with degree function 
\begin{equation}
d(x) = \int_0^1 g(x,y) dy
\end{equation}
can be uniquely written as 
\begin{eqnarray}\label{general-g}
g(x,y) & = & d(x) + d(y) - e + \delta g(x,y) \cr 
& = & (d(x)-\frac12) + (d(y)-\frac12) + (1-e) + \delta g(x,y) \cr 
g(x,y)-\frac12 & = & (d(x)-\frac12) + (d(y)-\frac12) - (e - \frac12) + \delta g(x,y), 
\end{eqnarray}
where $\delta g(x,y)$ is a function with zero marginals: 
\begin{equation}
\int_0^1 g(x,y) \, dx = \int_0^1 g(x,y) \, dy = 0. 
\end{equation}
We will show that $\delta g(x,y)$ is pointwise $O(\zeta^2)$ and that $d(x) - \frac12$ only 
takes on two values, within $O(\zeta^2)$
of $\pm \zeta$. This implies that $g$ is bipodal and follows the estimates 
(\ref{what-is-g}). The analyticity of $(a,b,c,d)$ then follows from the implicit function
theorem. The proof follows 
several steps: 
\begin{enumerate}
\item Using known properties of the entropy function near Erd\H{o}s-R\'enyi to estimate the 
Lagrange multipliers $\alpha$ and $\beta$ that appear in equations (\ref{eq1}--\ref{eq3}).
\item Showing that $g$ is at most tripodal and that the degree function is everywhere 
$\frac12 + o(1)$. This implies that $g$ is pointwise close to $\frac12$.
\item Comparing the entropy of the general graphon $g(x,y)$ of equation (\ref{general-g})
to the entropy of the ansatz graphon $g_0$. This will show that $\iint \delta g(x,y)^2
dx \, dy = O(\zeta^6)$, that $g_0$ comes within $O(\zeta^6)$ of achieving the 
maximum possible entropy, and that the variance of $(d(x)-\frac12)^2$ is $O(\zeta^6)$. 
\item Since $(d(x)-\frac12)^2$ is almost constant (in an $L^2$ sense), and since 
$\int_0^1 (d(x)-\frac12)^2 \, dx = \zeta^2$ (exactly), our graphon must either be 
bipodal with degrees very close to $\frac12 \pm \zeta$, or must be tripodal with a 
very small third pode. We rule out the latter possibility. 
\item We examine the variational equations on the space of bipodal graphons in a 
neighborhood of $g_0$ and show that there is a unique solution that depends analytically
on $(e,\tilde t)$. 
\end{enumerate}

\medskip

\noindent {\bf Step 1:} The function $H(u)$
admits a convergent power series expansion around $u=\frac12$: 
\begin{equation}
H(u) = H\left (\frac12 \right ) + \frac12 \, H''\left ( \frac12 \right ) \left ( u-\frac12
\right )^2 + \frac{1}{24} \, H''''\left ( \frac12 \right ) \left (u-\frac12 \right )^4
+ \cdots.
\end{equation}
This gives rise to a convergent power series expansion for the entropy of a graphon $g$:
\begin{equation}
S(g) = \sum_{k=0}^\infty \frac{\mu_{2k}}{(2k)!} H^{(2k)}\left (\frac12\right ),
\end{equation}
where 
\begin{equation}
\mu_{2k} = \iint \left (g(x,y)-\frac12\right )^{2k} \,dx\,dy.
\end{equation}

When $\tilde t = 0$, the maximal entropy is exactly $H(e)$. As we vary $e$, the infinitesimal change in $t$ is $2e de$, so 
\begin{equation}
\alpha + 2e \beta = H'(e) \approx H''\left (\frac12 \right ) \left (e-\frac12\right ). 
\end{equation} 
The existence of the ansatz graphon $g_0$, with entropy $H(e) + H''(\frac12) \tilde t
+ O(\zeta^4)$, shows that $\beta$ is no less than 
$H''(\frac12) + O(\zeta) = -4+O(\zeta)$. However, if $\beta$ were greater than 
$H''(\frac12) + O(\zeta)$, then there would only be one solution to equation (\ref{eq3}),
namely $z=k(z)=e$. But that gives $\tilde t = 0$. When $\tilde t >0$, we must have 
\begin{equation}
\alpha = -H''(e) + O(\zeta), \qquad \beta = H''(e) + O(\zeta).
\end{equation}

\smallskip

\noindent{\bf Step 2:} With these values of $\alpha$ and $\beta$, the line $y=z$ is 
nearly tangent to $y=k(z)$ at $y=e$. This implies that all solutions to $z=k(z)$ are 
close to $e$, or equivalently close to $1/2$. 

The function $k(z)$ is the convolution of a fixed (scaled) logistic curve. The 
logistic function $1/(1+\exp(-(\alpha+\beta(z+e))))$ has a negative third derivative near $z=e$. Any small convolution
of this function must likewise have a negative third derivative, meaning that its 
second derivative is decreasing and only passes through zero once near $z=e$. 
By Rolle's theorem, this implies 
that a line can only intersect the graph $y=k(z)$ at most three times near $z=e$.
Thus our optimal graphon must be at most tripodal, with degrees close to 1/2. By
equation (\ref{eq1}), this means that the graphon is pointwise close to 1/2, meaning
that $g(x,y) - \frac12$ is $o(1)$ as $\zeta \to 0$. 

\smallskip

\noindent{\bf Step 3:} We now compute the leading terms in the expansions of $S(g_0)$ and $S(g)$. For any graphon, 
let 
\begin{equation}
\nu_k = \int_0^1 \left (d(x) - \frac12 \right)^k \, dx. 
\end{equation}
Note that the first two moments are determined by $e$ and $\tilde t$:
\begin{equation} \nu_1= e - \frac12 \hbox{ and } \nu_2 = \zeta^2 = \tilde t + \left ( e - \frac12 \right )^2. 
\end{equation} 
Higher even moments are bounded from below: 
\begin{equation} 
\nu_{2k} \ge \nu_2^k,
\end{equation}
with equality if and only if $\left( d(x)-\frac12 \right )^2$ is constant. In particular, $\nu_4-\nu_2^2$ is the variance of 
 $\left( d(x)-\frac12 \right )^2$.

 Let 
 \[ \eta = \left ( \iint \delta g(x,y)^2 \, dx], dy \right )^{1/2} \]
 be the $L^2$ norm of $\delta g$. For an arbitrary graphon, the first two non-trivial moments are:
 \begin{eqnarray}
 \mu_2 & = & 2 \nu_2 - \left ( e - \frac12\right )^2 + \eta^2 \cr 
 \mu_4 & = & 2 \nu_4 +  6\nu_2^2 - 12 \left (e-\frac12\right)^2 \nu_2 + 5\left (e-\frac12\right)^4 \cr 
 && + 24 \iint \delta g(x,y) \left [ (d(x)-\frac12)^2(d(y-\frac12)- (e-\frac12) (d(x)-\frac12)(d(y-\frac12)\right ]   dx \, dy \cr 
 && + 12 \iint \delta g(x,y)^2 \left [ (d(x)-\frac12)^2- (d(x)-\frac12)(d(y)-\frac12)\right ]   dx \, dy \cr
  && + 4 \iint \delta g(x,y)^3 \left [ 2(d(x)-\frac12)- (e-\frac12) \right ]   dx \, dy \cr
  && + \iint \delta g(x,y)^4  dx \, dy + 6 (e-\frac12)^2 \|\delta g\|_2^2. 
 \end{eqnarray}
 For the graphon $g_0$, this simplifies to 
  \begin{eqnarray}
 \mu_2 & = & 2 \nu_2 - \left ( e - \frac12\right )^2  \cr 
 \mu_4 & = & 2 \nu_2^2 -12 \nu_2 (e-\frac12)^2 + 5(e-\frac12)^4 \cr 
 & = & 2\zeta^4 + 6 (\zeta^2 - (e-\frac12)^2)^2 - (e-\frac12)^4. 
 \end{eqnarray}
Comparing these, we see that there is an $H''(1/2)\eta^2$ cost in having $\delta g$ nonzero and a $H''''(1/2) (\nu_4-\nu_2^2)/12$ 
cost in having $(d(x) - \frac12)^2$ not be constant. There are also costs in $\mu_4$ from terms proportional to $\delta g^2$ or $\delta g^4$. 

The lowest-order benefits from having $\delta g$ nonzero are terms proportional to 
\begin{eqnarray}
\iint \delta g(x,y) \left (d(x)-\frac12 \right )^2\left ( d(y)-\frac12 \right ) dx\, dy &&  \hbox{and} \cr 
\iint \delta g(x,y) (e - \frac12) \left (d(x)-\frac12 \right )\left ( d(y)-\frac12 \right ) dx\, dy && \end{eqnarray}
By Cauchy-Schwarz, the first term is $O(\zeta \eta \sqrt{\nu_4})$, while the second 
is $O(\zeta^3\eta)$. 
Since $\nu_4$ cannot be much greater than $\nu_2^2$, the maximum possible 
benefit is $O(\zeta^3\eta)$. 

Any benefits from the 
expansion of $\mu_6$ and higher are higher order in $\zeta$, $\eta$, or both, so the total benefit of having 
$\delta g$ nonzero is $O(\zeta^3\eta)$
With a cost proportional to 
$\eta^2$ and benefits that are $O(\zeta^3\eta)$, $\eta$ must itself be $O(\zeta^3)$ and the net benefit of having 
$\delta g$ nonzero is $O(\zeta^6)$. 

This means that the net cost associated with having $(d(x)-\frac12)^2$ differ from a constant must itself be $O(\zeta^6)$. 
There are indeed benefits at higher order, for instance terms proportional to $(e-\frac12) \nu_2\nu_3$ that appear in the
expansion of $\mu_6$, but they are all $O(\zeta^6)$, so the cost in $\mu_4$, proportional to the variance of 
$(d(x)-\frac12)^2$, must be $O(\zeta^6)$. 

\smallskip

\noindent{\bf Step 4:} We recall some facts about cubic polynomials. Suppose that 
\begin{equation}
f(x) = x^3 + a_2 x^2 + a_1 x + a_0 
\end{equation}
has three real roots. The sum of the roots is $-a_2$ and the average of the roots is $-a_2/3$, which is also the unique
point where $f''(x)=0$. Now consider the convolution of $f$ with a distribution of degree functions $d(y)$:
\begin{equation}
\tilde f(x) = \int_0^1 f(x+d(y)) dy.
\end{equation}
If $\tilde f$ has three roots, then the sum of the roots is $-3e-a_2$, where $e = \int_0^1 d(y) dy$ ,and the average of 
the roots is $-e - \frac{a_2}{3}$. Furthermore, a simple algebraic calculations shows that 
\begin{equation} 
\tilde f(-e - \frac{a_2}{3}) = f(-\frac{a_2}{3}) + \int_0^1 (d(y)-e)^3 \, dy.
\end{equation}

Now consider the function 
\[ \frac{1}{1 + \exp(-(\alpha + \beta z)} \]
Near the point of inflection $z=-\alpha/\beta$, this function is approximately cubic, with corrections of order 
$(z + \frac{\alpha}{\beta})^5$. The value of this function at the point of inflection is exactly 1/2. 
Convolving this function by the degree function $d(y)$ we obtain the function $k(z)$. The solutions to $k(z)$ have average
value $\bar d = -e - \frac{\alpha}{\beta}$. The value of $k(\bar d)$ is $\frac12 + \int_0^1 (d(x)-e)^3 \, dx$, plus
$O((\bar d-e)^5)$ corrections due to $k(z)$ not being exactly cubic. 

We have already shown that two of the three roots of $k(z)-z$ are $\frac12 \pm \zeta + O(\zeta^2)$. Since 
$\int_0^1 d(x)^3 \, dx$ is then $O(\zeta^3)$, this implies that the average of the three roots is $\frac12 + O(\zeta^3)$,
which implies that the third root must be $\frac12 + O(\zeta^2)$. Let $c_3$ be the size of this pode. 

We now compute the cost 
\begin{equation} 
\nu_4 - \nu_2^2 = \int_0^1 ((d(x)-\frac12)^2-\zeta^2)^2 \, dx \approx c_3 \zeta^4. 
\end{equation} 
The derivative of the entropy with respect to $c_3$ has a positive term of order $\zeta^4$. Since $d(x)-\frac12$ is pointwise
$O(\zeta)$, all of the contributions to $\mu_6$ and higher have order $\zeta^6$ and higher, and cannot overcome this 
quartic cost. Nor can the cross-terms with $\delta g$, which we have shown to be $O(\zeta^6)$. Since the derivative of the 
entropy with respect to $c_3$ is positive, the entropy is maximized when $c_3=0$. That is, the optimal graphon is 
bipodal, not tripodal, with the degree functions on the two podes being $\frac12 \pm \zeta + O(\zeta^2)$. 

\smallskip

\noindent{\bf Step 5:} Bipodal graphons are described by four parameters $(a,b,c,d)$. The edge density, 2star density, and 
entropy are all analytic functions of $(a,b,c,d)$. Introducing Lagrange multipliers, we obtain four 
analytic equations in six unknowns: 
\begin{equation}
\nabla S  =  \alpha \nabla e + \beta \nabla t 
\end{equation}
This gives a 2-dimensional analytic variety of solutions. To see that $(a,b,c,d)$ are analytic functions of $e$ and $t$,
we need only check that the tangent space does not degenerate. That is, we must check that on this family of solutions, 
the edge and triangle densities can be varied independently to first order. 

However, that is easy. This property obviously holds for the ansatz graphon (\ref{eq:ansatz}), except at the singularity 
$\zeta=0$ (where the derivative of $\zeta$ with respect to $\tilde t$ diverges). 
$\partial d/\partial e=-1$, while $\partial d/\partial \tilde t=0$. However, 
the partial derivatives of $a$, $b$ and $c$ with respect to $\tilde t$ are nonzero, showing
that $\partial_e(a,b,c,d)$ and $\partial_{\tilde t}(a,b,c,d)$ are linearly independent 
vectors. In particular, 
\begin{equation} 
\frac{\partial a}{\partial e} \frac{\partial d}{\partial \tilde t} - 
\frac{\partial d}{\partial e} \frac{\partial a}{\partial \tilde t} = \frac{1}{\zeta}.
\end{equation}

The difference between the true graphon and the ansatz is small, in particular being
$O(\zeta^2)$ for $a$ and $d$, with the derivatives of these terms with respect to 
$\zeta$ being $O(\zeta)$. Since 
\begin{equation}
\frac{\partial \zeta}{\partial e} = \frac{e-\frac12}{\zeta}=O(1) \hbox{ and }
\frac{\partial \zeta}{\partial \tilde t} = \frac{1}{2\zeta},
\end{equation}
these terms can only change $\partial_e(a)$ and $\partial_e(d)$ by $O(\zeta)$ and 
$\partial_{\tilde t}(a)$ and $\partial_{\tilde t}d$ by $O(1)$, resulting in an $O(1)$
change in $\partial_ea \partial_{\tilde t}d - \partial_e d \partial_{\tilde t}a$,
which remains nonzero for small $\zeta$. 

\end{proof}

\section{Bifurcation point(s)} \label{sec:bifurcation}

We now turn our attention to the line $e=\frac12$. When $\tilde t$ is close
to $\left ( \frac{\sqrt{2}-1}{4} \right )$, there are two optimal graphons, one clique-like and the other 
anti-clique-like. When $\tilde t$ is close to 0, there is a unique optimal graphon, 
which must be symmetric under $g \leftrightarrow 1-g$. That is, it must
be bipodal with $c=d=1/2$ and $b=1-a$. 
Somewhere between these regions, there must be a bifurcation point 
$(\frac12, \tilde t^*)$, where the system transitions from having a unique 
optimal maximizer to having multiple inequivalent maximizers. In principle there
might be multiple critical points; we are guaranteed to have at least one. 

We are not prepared to investigate a hypothetical point where a 
graphon that is far from symmetric has a Shannon entropy that matches and then exceeds
the entropy of a symmetric graphon. 
However, we can answer a simpler
question: {\em At what value of $\tilde t$ does the bipodal graphon with $a=1-b$ and $c=d=1/2$ stop being a local maximizer
of the entropy within the 4-dimensional space of bipodal graphons?}

\begin{theorem}\label{thm:bifurcation} There is a critical value $\tilde t^* \approx 0.03727637$ such that
\begin{enumerate} 
\item For all $\tilde t < \tilde t^*$, there is a bipodal graphon with $b=1-a$, $c=d=1/2$, 
that is a local maximizer of the entropy among all bipodal graphons with edge density 
$1/2$ and 2star density $\tilde t + \frac14$.
\item If $\tilde t^* < \tilde t \le 0.0625$, then there exist bipodal graphons with $b=1-a$ and
$c=d=1/2$, but these graphons are not local maximizers. 
\item If $\tilde t > 0.0625$, then there do not exist bipodal graphons
  with $b=1-a$ and $c=d=1/2$.
  
\end{enumerate}
\end{theorem}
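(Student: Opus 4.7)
The plan is to settle part (3) by direct parametric computation and parts (1), (2) by a Hessian analysis at the symmetric graphon, using the $g \leftrightarrow 1-g$ symmetry to reduce the local-maximality question to a $2\times 2$ eigenvalue problem.

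\textbf{Parametrization and part (3).} A bipodal graphon with $b = 1-a$ and $c = d = 1/2$ has pode-degrees $a/2 + 1/4$ and $3/4 - a/2$, so $e = 1/2$ automatically and $\tilde t = (a - 1/2)^2/4$. Requiring $a \in [0,1]$ therefore forces $\tilde t \le 1/16 = 0.0625$, which is (3). For $\tilde t \in (0, 1/16]$ I fix the graphon $g_*$ with $a_* = 1/2 + 2\sqrt{\tilde t}$ (the companion root $1/2 - 2\sqrt{\tilde t}$ gives an equivalent graphon via pode swap), and ask whether $g_*$ is a local maximizer of $S$ on the 2-dimensional constraint surface $\Sigma = \{e = 1/2,\ t = \tilde t + 1/4\}$ inside the 4-dimensional space of bipodal graphons.

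\textbf{Symmetry decomposition and tangent space.} The involution $\sigma:(a,b,c,d)\mapsto(1-b,1-a,1-c,1-d)$ fixes $g_*$; its differential splits the tangent space into a 1-dimensional $(+1)$-eigenspace spanned by $(1,-1,0,0)$ and a 3-dimensional $(-1)$-eigenspace spanned by $(1,1,0,0)$, $(0,0,1,0)$, $(0,0,0,1)$. Direct computation at $g_*$ gives $\nabla e = (1/4,\, 1/4,\, 2a_* - 1,\, 1/2)$ and $\nabla t = (a_*/4 + 1/8,\, 3/8 - a_*/4,\, 2a_* - 1,\, 1/2)$, from which $\nabla e$ and $\nabla t$ agree as linear functionals on the odd subspace (both equal $(s,u,v)\mapsto s/2 + (2a_*-1)u + v/2$), while on the even subspace $\nabla e$ vanishes and $\nabla t$ is a nonzero multiple of $a_* - 1/2$. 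Hence for $a_* \ne 1/2$, $T_{g_*}\Sigma$ is 2-dimensional and lies entirely in the odd subspace, cut out by the single equation $\delta e = 0$.

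\textbf{Hessian and bifurcation.} Introduce $\alpha,\beta$ with $\nabla S = \alpha\nabla e + \beta\nabla t$; equation (\ref{eq1}) at $g_*$, using $d_1 + d_2 = 1$, yields $\alpha + \beta = 0$ with explicit $\beta = \ln(a_*/(1-a_*))/(a_*-1/2)$. Since $\alpha + \beta = 0$, the Lagrangian $L = S - \alpha e - \beta t$ is $\sigma$-invariant, so $\nabla^2 L$ at $g_*$ is block-diagonal in the even/odd decomposition. Expanding $S = c^2 H(a) + (1-c)^2 H(b) + 2c(1-c) H(d)$ to second order, I compute the $3\times 3$ odd block in coordinates $(s,u,v) = (\delta a = \delta b,\,\delta c,\,\delta d)$ and restrict it to the 2-dimensional kernel of $\delta e = 0$, producing a symmetric matrix $M(a_*)$ whose entries are elementary functions of $a_*$ and of $H''(a_*), H''(1/2)$. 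Then $g_*$ is a local maximizer iff $M(a_*)$ is negative definite; the bifurcation value $a_*^{\#}$ is the smallest $a_* > 1/2$ at which $\det M(a_*^{\#}) = 0$. Theorem \ref{thm:low-t} gives negative definiteness as $a_* \to 1/2^+$, while a direct check at $a_* = 1$ (i.e.\ $\tilde t = 1/16$) shows $M$ to be indefinite there, so $a_*^{\#} \in (1/2, 1)$; numerical root-finding on the resulting transcendental equation in $\ln(a_*/(1-a_*))$ gives $\tilde t^* = (a_*^{\#} - 1/2)^2/4 \approx 0.03727637$, and a monotonicity check that $\det M$ changes sign exactly once on $(1/2, 1)$ then delivers (1) and (2). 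The principal obstacle I expect is assembling $M(a_*)$ in a form compact enough to make the monotonicity of $\det M$ transparent; once that matrix is in hand, the remainder reduces either to inputs already established by Theorem \ref{thm:low-t} or to a single-variable numerical computation.
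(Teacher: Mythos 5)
Your outline is correct and represents a genuinely different organization of the same underlying second-order analysis. The paper builds the constraints $e=\tfrac12$ and $\mu^2=\tilde t$ directly into its coordinates via the parametrization $g=\tfrac12+\mu[v(x)+v(y)]+\nu\delta\,v(x)v(y)$ with $c=\tfrac12+\delta$, expands $S$ to order $\delta^2$, maximizes over $\nu$, and reads off the sign of the resulting coefficient of $\delta^2$ --- in effect a Schur complement of the $2\times2$ Hessian on the constraint surface. You instead work in the raw $(a,b,c,d)$ coordinates with Lagrange multipliers, and your observation that $\alpha+\beta=0$ makes $L=S-\alpha e-\beta t$ invariant under $\sigma:(a,b,c,d)\mapsto(1-b,1-a,1-c,1-d)$, so that $\nabla^2 L$ block-diagonalizes and $T_{g_*}\Sigma$ sits entirely in the odd block, is a clean conceptual explanation for a decoupling the paper does not spell out. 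The two routes must agree numerically. What your proposal leaves undone is the computation itself: you promise the $2\times2$ matrix $M(a_*)$ but never exhibit it, whereas the paper carries the expansion through to the explicit closed form
\begin{equation*}
\frac{\Delta S}{\delta^2}=\mu\ln\!\left(\frac{1+4\mu}{1-4\mu}\right)-\frac12\ln(1-16\mu^2)-\frac{16\mu^2}{1-8\mu^2}
\end{equation*}
before locating the zero numerically. Two smaller cautions: ``local maximizer iff $M$ negative definite'' conflates the necessary and sufficient second-order conditions (at the crossover both are silent), and ``a direct check at $a_*=1$'' must really be a limit as $a_*\to1^-$, since $H''(a_*)\to-\infty$ there; the paper handles this by observing that $\Delta S/\delta^2\to+\infty$ logarithmically as $\mu\to\tfrac14^-$. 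Finally, the claim that the sign change on $(\tfrac12,1)$ happens exactly once is, in the paper as in your sketch, a numerical observation rather than a proved monotonicity, so you are not missing anything the paper itself supplies on that point.
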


\begin{proof}
Every bipodal graphon can be expressed as a linear combination of 
a constant graphon, the function $v(x)+v(y)$, and the function $v(x)v(y)$, where 
\begin{equation}
v(x) = \begin{cases} \sqrt{\frac{1-c}{c}} & x < c, \cr 
- \sqrt{\frac{c}{1-c}} & x>c, 
\end{cases}
\end{equation}
for some constant $c$. For any fixed value of $c$, we can adjust the coefficient of 
$v(x)v(y)$ to maximize the Shannon entropy. This gives us entropy as a function of 
$c$. By doing a power series expansion around $c=1/2$, we determine whether the symmetric
graphon is a local maximum or a local minimum of the entropy. 

We therefore consider graphons with $c = \frac12 + \delta$ and 
\begin{equation}
g(x,y) = \frac12 + \mu [v(x)+ v(y)] + \nu \delta v(x) v(y), 
\end{equation}
We put an explicit factor of $\delta$ in the coefficient of $v(x)v(y)$ in order to make
$\nu$ an even function of $\delta$. Since we only care about the entropy to order $\delta^2$,
the dependence of $\nu$ on $\delta$ will not matter. The degree function is then 
$d(x) = \frac12 + \mu v(x)$, whose variance is $\mu^2$, so $\mu = \sqrt{\tilde t}$. 

Our symmetric graphons have $a=\frac12 + 2\mu$, $b=\frac12 - 2\mu$, and $c=d=\frac12$.
Since $a$ and $b$ must be between 0 and 1, $|\mu|$ cannot be greater than $\frac12$.
These symmetric graphons are only defined when $\tilde t \le \frac{1}{16} = 0.0625$. 

For general (not necessarily symmetric) bipodal graphons, the four parameters $(a,b,c,d)$
are
\begin{eqnarray} 
a & = & \frac12 + 2\mu \sqrt{\frac{1-c}{c}} + \nu \delta \frac{1-c}{c} \cr 
& = & \frac12 + 2\mu + (\nu - 4\mu)\delta + 4(\mu-\nu)\delta^2 + O(\delta^3) \cr 
b & = & \frac12 - 2\mu \sqrt{\frac{c}{1-c}} + \nu \delta \frac{c}{1-c} \cr 
& = & \frac12 - 2\mu + \delta(\nu-4\mu) + 4\delta^2(\nu-\mu) + O(\delta^3) \cr 
c & = & \frac12 + \delta \cr 
d & = & \frac12 + \mu \left ( \sqrt{\frac{1-c}{c}} - \sqrt{\frac{c}{1-c}} \right ) - \nu \delta \cr
& = & \frac12 - (4\mu + \nu) \delta + O(\delta^3).
\end{eqnarray}

We expand the values of $H(a)$, $H(b)$ and $H(d)$ in power series, using the facts that \\
$H'(\frac12 - 2\mu) = -H'(\frac12 + 2\mu)$ and $H''(\frac12 - 2\mu) = H''(\frac12 + 2\mu)$. 
\begin{eqnarray}
H(a) & = & H\left (\frac12 + 2\mu \right ) + H'\left (\frac12+2\mu\right ) \left [ (\nu-4\mu)\delta + 4(\mu-\nu)\delta^2 \right ] \cr && 
+ \frac12 H''\left (\frac12+2\mu\right )(\nu-4\mu)^2 \delta^2 + O(\delta^3), \cr 
H(b) & = & H\left (\frac12 + 2\mu\right ) + H'\left (\frac12+2\mu\right ) \left [(4\mu-\nu)\delta + 4(\mu-\nu)\delta^2 \right ] \cr && 
+ \frac12 H''\left (\frac12+2\mu \right )(\nu-4\mu)^2 \delta^2 + O(\delta^3), \cr 
H(d) & = & H\left (\frac12\right ) + \frac12 H''\left (\frac12 \right ) (\nu+4\mu)^2 \delta^2 + O(\delta^3).\cr
\end{eqnarray}

The Shannon entropy of a bipodal graphon is 
\begin{eqnarray}
S(g) &=& c^2 H(a) + (1-c)^2 H(b) + 2c(1-c) H(d) \cr 
& = & \frac14[H(a)+H(b)+2H(d)] + \delta [H(a)-H(b)] + \delta^2[H(a)+H(b)-2H(d)]. 
\end{eqnarray}
Plugging in our previously computed values of $H(a)$, $H(b)$ and $H(d)$ gives
\begin{eqnarray}\label{S-is}
S(g) & = & \frac12 \left ( H(\frac12 + 2\mu) + H(\frac12) \right ) 
- 6 \delta^2 \mu H'(\frac12 + 2\mu)  \cr 
&& + \frac14 \delta^2 \left [ (\nu - 4\mu)^2 H''(\frac12 + 2\mu) + (\nu+4\mu)^2 H''(\frac12)\right ] \cr 
&& + 2\delta^2 \left ( H(\frac12 + 2\mu) - H(\frac12) \right ) + O(\delta^3).
\end{eqnarray}
That is, the change in the entropy from the symmetric graphon with $\delta=0$ is 
proportional to $\delta^2$ (plus higher-order terms). To leading order,
the change in entropy is quadratic in $\nu$. This quadratic function is maximized when 
\begin{equation}\label{nu-is} 
\nu = 4\mu \frac{H''(\frac12+2\mu) - H''(\frac12)}{H''(\frac12+2\mu) + H''(\frac12)}. 
\end{equation}

Next we compute $H$, $H'$ and $H''$ at $\frac12$ and $\frac12 + 2\mu$: 
\begin{eqnarray} 
H(\frac12) = \frac12 \ln(2) && H(\frac12 + 2\mu) = -\frac12 \left ( 
(\frac12 + 2\mu) \ln(\frac12 + 2\mu) + (\frac12 - 2\mu) \ln (\frac12 - 2\mu) \right )\cr 
H'(\frac12) = 0 && H'(\frac12 + 2\mu) = \frac12 (\ln(\frac12 - 2\mu) - \ln(\frac12 + 2\mu)) \cr 
H''(\frac12) = -2 && H''(\frac12 + 2\mu) = \frac{-2}{1-16\mu^2} 
\end{eqnarray}
The combinations that appear in equation (\ref{S-is}) are: 
\begin{eqnarray}
H''(\frac12 + 2\mu) - H''(\frac12) & = \displaystyle{-2 \left ( \frac{1}{1-16\mu^2} - 1 \right )} & = 
-2 \left (\frac{16\mu^2}{1-16\mu^2} \right ) \cr 
H''(\frac12 + 2\mu) + H''(\frac12) & = \displaystyle{-2 \left ( \frac{1}{1-16\mu^2} + 1 \right )} & = 
-2 \left (\frac{2-16\mu^2}{1-16\mu^2} \right ) \cr
\nu & = \displaystyle{4\mu \left( \frac{16\mu^2}{2-16\mu^2} \right )} & = 4 \mu \left (\frac{8\mu^2}{1-8\mu^2}
\right ) \cr 
\nu + 4\mu & = \displaystyle{4\mu \left (\frac{8\mu^2}{1-8\mu^2} + 1 \right )} & = \frac{4\mu}{1-8\mu^2} \cr 
\nu - 4\mu & = \displaystyle{4\mu \left ( \frac{8\mu^2}{1-8\mu^2} - 1 \right )} & = 
4\mu \left ( \frac{16 \mu^2 - 1}{1-8\mu^2} \right ).
\end{eqnarray}
Combining, we have that 
\begin{eqnarray} 
\frac{\Delta S}{\delta^2} & = & 3 \mu \ln \left ( \frac{\frac12 + 2\mu}{\frac12 - 2\mu}
\right ) - \frac{2}{1-16\mu^2} \frac{16\mu^2(16\mu^2-1)^2}{4(1-8\mu^2)^2} \cr 
&& - \frac24 \frac{16\mu^2}{(1-8\mu^2)^2} - \left ( 
(\frac12 + 2\mu) \ln(\frac12 + 2\mu) + (\frac12 - 2\mu) \ln(\frac12  2\mu) + \ln(2) \right )
+ O(\delta), 
\end{eqnarray}
where the first term is $-6\mu H'(\frac12+2\mu)$, the second is $\frac14 (\nu-4\mu)^2 H''(\frac12+2\mu)$, the third is $\frac14 (\nu+4\mu)^2 H''(\frac12)$, and the last is 
$2(H(\frac12+2\mu) -H(\frac12))$. The logarithmic terms simplify to 
\[
\mu \ln(1+4\mu) - \mu \ln(1-4\mu) - \frac12 \ln(1-16\mu^2),
\]
while the algebraic terms simplify to 
\[ \frac{-16\mu^2}{1-8\mu^2}. \]
The total is negative when $\mu$ is small, going as $-64 \mu^4/3$, but turns positive for
larger values of $\mu$, diverging logarithmically as $\mu$ approaches 1/4. 
The crossover point is at 
\begin{equation}\label{eqnCriticalPoint}
  \mu^* \approx 0.1930708944, \qquad \tilde t^* \approx 0.0372763703.
  \end{equation}
\end{proof}

\section*{Appendix}

We include here proofs of two key steps in the project which began
with \cite{RS1}; the existence
of the Boltzmann entropy, Theorem \ref{thmBoltzmann} (proven is less
generality in \cite{RS1,RS2}), and the
connection with large finite graphs, Theorem \ref{thmTypicality}.

\begin{proof}[Proof of Theorem \ref{thmBoltzmann}] We first prove that 
$B_{t_1,\ldots,t_k}$ is well-defined.
A priori we only know that $\liminf\ln
  (Z_{t_1,\ldots,t_k}^{n,\delta})/n^2$ and $\limsup\ln (Z_{t_1,\ldots,t_k}^{n,\delta})/n^2$
  exist as ${n\to\infty}$. However, we will show that they both
  approach $\max S(g)$ as $\delta \to 0^+$.

  We need to define a few sets. Let $U_\delta$ be the set of graphons
  $g$ with each $\tdens_i(g)$ strictly within $\delta$ of $t_i$,
  i.e.\ the preimage of an open $k$-cube of side $2\delta$ in
  $t$-space, let $F_\delta$ be the preimage of the closed
  $k$-cube. and let $\tilde U_\delta^n$ and $\tilde F_\delta^n$ be the corresponding sets 
  in $\widetilde \www$. Let $|U_\delta^n|$ and
  $|F_\delta^n|$ denote the number of graphs with $n$ vertices whose
  checkerboard graphons lie in $U_\delta$ or
  $F_\delta$. By the large deviations principle, Theorem \ref{thmCV},

  \begin{equation} \limsup_{n \to \infty} \frac{\ln|F_\delta^n|}{n^2}
    \le \sup_{\tilde g\in \tilde F_\delta} S(\tilde g),\end{equation}
  which also equals $\sup_{g \in F_\delta} S(g)$, and 
  \begin{equation} \liminf_{n \to \infty} \frac{\ln|U_\delta^n|}{n^2}
    \ge \sup_{\tilde g \in \tilde U_\delta} S(\tilde g),\end{equation}
  which also equals $\sup_{g \in U_\delta} S(g)$.  This yields a
  chain of inequalities
  \begin{equation} \sup_{U_\delta} S(g) \le \liminf
    \frac{\ln|U^n_\delta|}{n^2} \le \limsup
    \frac{\ln|U^n_\delta|}{n^2} \le \limsup
    \frac{\ln|F_\delta^n|}{n^2} \le  \sup_{F_\delta} S(g) \le 
    \sup_{U_{\delta+\delta^2}} S(g).\end{equation} As $\delta \to 0^+$,
  the limits of $\sup_{U_\delta} S(g)$ and
  $\sup_{U_{\delta+\delta^2}} S(g)$ are the same, and everything in
  between is trapped.

So far we have proven that $\displaystyle B_{t_1,\ldots,t_k}$ exists and equals
\begin{equation} \lim_{\delta \to 0^+}
\sup_{g \in U_\delta} S(g).\end{equation} 
This limit is manifestly at least as big as $\max S(g)$, the maximum value of $S(g)$ among 
graphons with each $\tdens_i$ exactly equal to $t_i$. To see that is cannot be greater,
imagine a sequence of graphons $g_j$  with each $\tdens_i(g_k)$ converging to $t_i$, 
and with $S(g)$ greater than $\max S(g) + \epsilon$. By the compactness of $\widetilde 
\www$, there is a subsequence whose classes in $\widetilde \www$ converge to that of 
a graphon $g_\infty$. The densities
$\tdens_i$ are continuous in the cut metric, so $\tdens_i(g_\infty)=t_i$. The 
entropy functional 
is upper-semicontinuous \cite{CV}, so $S(g_\infty) \ge \max S(g) + \epsilon$, 
which contradicts the definition of $\max S(g)$. 

\end{proof} 

\medskip

\begin{proof}[Proof of Theorem \ref{thmTypicality}]
Let $U_\epsilon$ denote the open set in 
$\widetilde \www$ of graphons whose cut distance 
from $g_0$ is strictly less than $\epsilon$, and let $\bar U_\epsilon$ be those
graphons of distance $\epsilon$ or less. The complements $U_\epsilon^c$ 
(resp.~$\bar U_\epsilon^c$) are then closed (resp.~open) sets of graphons whose 
distance from $g_0$
is greater than or equal to $\epsilon$ (resp.~strictly greater than $\epsilon$). 
Let $V_\delta$ (resp.~$\bar V_\delta$) denote the set of all graphons $g$ with densities $\tdens_i(g)$
in $(t_i-\delta, i_i+\delta)$ (resp. $[t_i-\delta, t_i+\delta]$) for each $i$.

If $V_\delta \cap \bar U_\epsilon^c$ is empty for any $\delta$, then {\em all\/} 
checkerboard graphons in $V_\delta$ are close to $g_0$ and there is nothing left to prove. 
Otherwise, let $S_0 = S(g_0)$. Let
$S_{3,\delta,\epsilon}$
be the supremum of $S(g)$ on $V_\delta \cap \bar U_\epsilon^c$.
For fixed $\epsilon$, let $S_3 = \lim_{\delta \to 0} S_{3,\delta,\epsilon}$.

The proof proceeds in five steps:
\begin{enumerate}
\item For any fixed $\epsilon$, showing that
$S_3  < S_0$.
\item Picking $K < S_0 -S_3$ and numbers
$S_1$ and $S_2$, such that $S_0 > S_1 > S_2 > S_3$ and $S_1 = S_2 + K$.
\item Showing that, for any $\delta$, the number of graphs in
$\mathcal{G}_{\delta,n}$
is eventually greater than $\exp(S_1 n^2)$.
\item Showing that, for $\delta$ sufficiently small, the number of graphs in
$\mathcal{G}_{\delta,n} \cap U_\epsilon^c$ is eventually smaller than
$\exp(S_2 n^2)$.
\item Concluding that, for $\delta$ sufficiently small and $n$ larger than
a number
that depends only on $\delta$ and $\epsilon$, the number of graphs in
$\mathcal{G}_{\delta,n} \cap U_\epsilon^c$ divided by the number of
graphs in
$\mathcal{G}_{\delta,n}$ is less than $\exp(-Kn^2)$.
\end{enumerate}

\medskip

\noindent{\bf Step 1:} Suppose that $S_3 \ge S_0$. Then there would
exist a sequence
of graphons $g_1, g_2, \ldots$ in $U_\epsilon^c$, with densities
approaching $(t_1,\ldots,t_k)$, with $\limsup S(g_j)\ge S_0$.
As in the proof of Theorem \ref{thmBoltzmann}, we use the compactness of 
$\widetilde \www$, the continuity of $\tdens_i$, and the semi-continuity 
of $S$ to construct
a subsequential limit $g_\infty \in \bar U_\epsilon^c$ with 
densities equal to $(t_1,\ldots,t_k)$ and
with $S(g_\infty) \ge S_0$. That contradicts the uniqueness of $g_0$, so we 
conclude that $S_3 < S_0$.

Note that $S_3$ cannot be negative, as the functional $S(g)$ is 
positive semi-definite. So what happens when $S_0=0$? In that case, 
$V_\delta \cap \bar U_\epsilon^c$ must be empty when $\delta$ is small. 

\medskip

\noindent{\bf Step 2:} Take $K = (S_0-S_3)/3$, $S_1 = S_0-K$ and $S_2=S_0-2K$.

\medskip

\noindent{\bf Step 3:} For any $\delta>0$, the supremum of $S(g)$ over 
$V_\delta$ is at least
$S_0$, and so is strictly greater than $S_1$. Since $V_\delta$ is 
an open set, 
\begin{equation} \liminf_{n\to \infty}
\frac{1}{n^2} \ln(\# {\mathcal{G}}_{n,\delta}) \ge S_0 > S_1,
\end{equation} 
so for all sufficiently large values of $n$, $\# \mathcal{G}_{n,\delta} > \exp(S_1
n^2)$.

\medskip

\noindent{\bf Step 4:} Since $\lim_{\delta \to 0} S_{3,\delta,\epsilon}
= S_3 < S_2$,
there exists a nonzero value of $\delta$ for which
$S_{3,\delta+\delta^2,\epsilon} < S_2$. The number of graphs in 
$V_{\delta} \cap U_\epsilon^c$ 
is bounded by the number of graphs in the closed set $\bar V_\delta \cap U_\epsilon^c$
and the entropy $S(g)$ on $\bar V_\delta \cap U_\epsilon^c$ is bounded by 
$S_{3,\delta+\delta^2,\epsilon} < S_2$. 
By the first half of Theorem \ref{thmCV},  
\begin{equation} \limsup_{n\to \infty}
\frac{1}{n^2} \ln(\# (\bar V_\delta \cap U_\epsilon^c)) <S_2,
\end{equation} 
so the smaller quantity $\#(V_\delta \cap U_\epsilon^c)$ grows strictly
slower than $\exp(n^2 S_2)$,
and in particular is eventually bounded by $\exp(n^2 S_2)$.

\medskip

\noindent{\bf Step 5:} Now we consider the order of operations. Given
$\epsilon$, we first compute $S_3$ and define $k$, $S_1$ and $S_2$. We then pick a $\delta$
such that the size of $V_{\delta} \cap U_\epsilon^c$ is bounded by $\exp(n^2 S_2)$
for all sufficiently large $n$. The phrase ``sufficiently large'' means that
there is a number $N_1$, depending on $\delta$ and $\epsilon$, such that the bound
applies for all $n>N_1$. Meanwhile, the number of graphs in $\mathcal{G}_{n,\delta}$ 
is at least $\exp(n^2 S_1)$ for all $n$ greater than another constant $N_2$. 
Pick $N = \max(N_1, N_2)$.

The upshot is that for this value of $\delta$, and for all $n>N$,
\[ \frac{\#(\mathcal{G}_{n,\delta} \cap
U_\epsilon^c)}{\#(\mathcal{G}_{n,\delta})}
\le \frac{\exp(n^2 S_2)}{\exp(n^2 S_1)} = \exp(-Kn^2). \]

\end{proof}

\end{document}